\title[ projective tensor products of $C^*$-algebras]{
 projective tensor products of $C^*$-algebras
}
\theoremstyle{plain}
\newtheorem{thm}{\sc Theorem}[section]
\newtheorem{cor}[thm]{\sc Corollary}
\newtheorem{lem}[thm]{\sc Lemma}
\newtheorem{rem}[thm]{\sc Remark}
\newtheorem{prop}[thm]{\sc Proposition}
\newcommand{\ot}{\otimes}
\newcommand{\C}{\mathbb{C}}
\newcommand{\ds}{\displaystyle}
\newenvironment{pf}{\noindent {\sc Proof:}}{\hfill $\Box$}
\begin{document}

\author[A. Kumar]{
AJAY KUMAR }
\address{Department of Mathematics\\
University of Delhi\\
Delhi\\
India.}
\email{akumar@maths.du.ac.in}
\thanks{}
\author[V. Rajpal]{
Vandana Rajpal }
\address{Department of Mathematics\\
University of Delhi\\
Delhi\\
India.}
\email{vandanarajpal.math@gmail.com}
\keywords{Banach space projective tensor norm, Operator space projective tensor norm.}
\subjclass[2010]{Primary 46L06, Secondary 46L07,47L25.}

\maketitle


\begin{abstract}
For $C^*$-algebras $A$ and $B$, we study  the bi-continuity of the canonical embedding of $A^{**}\ot_{\gamma} B^{**}$ ($A^{**}\widehat{\ot} B^{**}$) into $(A \ot_{\gamma} B)^{**}$ (resp. $(A \widehat{\ot} B)^{**}$), and its isomorphism. Ideal structure of  $A\widehat{\ot} B$ has been obtained in case $A$ or $B$ has only finitely many closed ideals.
\end{abstract}

\section{Introduction}
The systematic study of various tensor norms on the tensor product of Banach spaces was begun with the work of Schatten~\cite{schat}. Later on, Carne characterized those norms for which the multiplication is continuous. One of them is the Banach
 space projective tensor norm. For a pair of arbitrary  Banach spaces $X$ and $Y$  and $u$ an element in the algebraic tensor product $X\otimes Y$,  the Banach
 space projective tensor norm is defined to be\\
 \hspace*{2.3 cm }$\|u\|_{\gamma}=\inf\{\ds\sum_{i=1}^{n}\|x_i\|\|y_i\|:u=\ds\sum_{i=1}^{n} x_i\ot y_i\}.$\\
$X\ot_{\gamma}Y$ will  denote    the completion of $X\otimes Y$ with respect to this norm.  For  operator spaces $X$ and $Y$,  the operator space projective tensor product of $X$ and $Y$ is denoted by $X\widehat{\ot}Y$
and is defined to be the completion of $X\ot Y$ with respect to the  norm:\\
\hspace*{3.3 cm } $\|u\|_{\wedge}=\inf\{\|\alpha\|\|x\|\|y\|\|\beta\|\},$\\
the infimum taken over $p,q\in \mathbb{N}$ and all the ways to write $u=\alpha (x\ot y)\beta$, where $\alpha\in M_{1,pq}$, $\beta\in M_{pq,1}$, $x\in M_{p}(X) $ and $y\in M_{q}(Y)$, and
$x\otimes y=(x_{ij}\otimes y_{kl})_{(i,k),(j,l)}\in M_{pq}(X\otimes Y)$.
Kumar and Sinclair  defined an embedding  $\mu$ from $A^{**}\ot_{\gamma} B^{**}$ into $(A \ot_{\gamma} B)^{**}$, and using the non-commutative version of Grothendieck’s theorem to the setting of bounded bilinear forms on $C^*$-algebras, it was shown that this embedding satisfies $\frac{1}{4}\|u\|_{\gamma}\leq \|\mu(u)\|\leq \|u\|_{\gamma}$(~\cite{Ajay}, Theorem 5.1). Recently, analogue of Grothendieck’s theorem for jointly completely bounded (jcb) bilinear forms was obtained by  Haagerup and Musat~\cite{musat}. Using this form for jcb,  the canonical embedding for the operator space projective tensor product have been studied by Jain and Kumar~\cite{r2}.

In section 2, an alternate approach for the bi-continuity of the canonical  embedding of $A^{**}\ot_{\gamma} B^{**}$ into $(A \ot_{\gamma} B)^{**}$ has been presented with an improved constant.  Our proof essentially uses  the fact that the dual of the Banach space projective tensor norm is the Banach space injective tensor norm.   We also consider  the corresponding operator space version of this embedding and discuss its isomorphism. In the next section, it is shown that if the number of all closed ideals in one of the $C^*$-algebras is finite then every closed ideal of $A \widehat{\ot} B$ is
a finite sum of product ideals. We may point that such result fails for $A\ot_{\min}B$, the minimal tensor product of $C^*$-algebras $A$ and $B$.  Section 4 is devoted to the inner automorphisms of $A \widehat{\ot} B$ and  $A\ot_{h} B$ for $C^*$-algebras  as well as for exact operator algebras. Recall that the Haagerup norm on the algebraic tensor product of two operator spaces $V$ and $W$ is defined, for $u\in V\otimes W$, by\\
\hspace*{3 cm}$\|u\|_{h}=\inf\{\|\displaystyle\sum_{i=1}^{n}v_{i}v_{i}^{*}\|^{1/2}\|\displaystyle\sum_{i=1}^{n}w_{i}^{*}w_{i}\|^{1/2}: u=\displaystyle\sum_{i=1}^{n}v_{i}\otimes w_{i} \}$.\\
The Haagerup tensor product $V\otimes_{h}W$ is defined to be the completion of $V\otimes W$ in the norm $\|\cdot\|_{h}$~\cite{blecher}.

%
%

\section{Isomorphism of Embeddings}
For Banach spaces $X$ and $Y$ and $\phi_i\in X^*$, $\psi_i\in Y^*$,  define
a linear map $J: X^*\ot Y^*\to B(X\times Y,\C) $ as $J(\ds\sum_{i=1}^n \phi_i\ot \psi_i)(x,y)=\ds\sum_{i=1}^n \phi_i(x) \psi_i(y)$, for $x\in X$
and $y\in Y$. Using  (~\cite{ryan}, Proposition 1.2), it is easy to see that $J$ is well defined. Also, clearly this map is linear and contractive with respect to $\|\cdot\|_{\gamma}$, and in fact  $\|J\|=1$, and hence can be extended to $X^*\ot_{\gamma}Y^*$ with $\|J\|=1$.
A bilinear form $T$  in $B(X\times Y,\C)$ is called nuclear if $T \in J(X^*\ot_{\gamma}Y^*)$, and the nuclear norm of $T$ is defined to be
$\|T\|_N=\inf\{\ds\sum_{n=1}^{\infty} \|\phi_n\|\|\psi_n\|: T=\ds\sum_{n=1}^{\infty}\phi_n\ot \psi_n\}$. The Banach space of nuclear bilinear forms is denoted
by $B_N(X\times Y,\C)$. For $C^*$-algebras $A$ and $B$,  consider the canonical map $\theta$ from $A\ot_{\gamma}B$ into $(A^*\ot_{\lambda}B^*)^* $, the dual of the Banach space injective tensor product of $A^*$ and $B^*$, defined by  $\theta=i^{'} \circ J \circ i$, where  $i$ is the natural isometry of $A\ot_{\gamma}B $ into $A^{**}\ot_{\gamma}B^{**}$,
 $J$ is as above with $X= A^{*}$ and $Y=B^{*}$,   $i^{'}$ is the natural inclusion of
$B_{N}(A^*\times B^*, \C)$ into $B_{I}(A^*\times B^*, \C)$,  the space of integral bilinear forms.

\begin{lem}\label{on8}
For $C^*$-algebras $A$ and $B$, the canonical map $\theta:A\ot_{\gamma}B\to (A^*\ot_{\lambda}B^*)^*(=J(A^*, B^{**}),$  the space of integral operators from $A^*$ to $B^{**} )$  satisfies $\frac{1}{2} \|u\|_{\gamma} \leq \|\theta(u)\| \leq \|u\|_{\gamma} $ for all $u\in A\ot_{\gamma}B$. In particular, $\theta$ is  bi-continuous.
\end{lem}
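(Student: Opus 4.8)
The plan is to separate the claim into the routine upper bound and the substantive lower bound, after first rewriting $\|\theta(u)\|$ in a usable form. For the upper bound I would simply observe that $\theta = i'\circ J\circ i$ is a composition of contractions: $i$ is an isometry, $J$ is contractive with $\|J\|=1$ by the remark preceding the statement, and $i'$ is contractive because the integral norm is dominated by the nuclear norm. Hence $\|\theta(u)\|\le\|u\|_\gamma$ at once.

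For the lower bound I would first reinterpret the target norm. Since $\theta(u)\in(A^*\ot_\lambda B^*)^*$, we have $\|\theta(u)\|=\sup\{|\langle\theta(u),w\rangle|:w\in A^*\ot B^*,\ \|w\|_\lambda\le 1\}$. Writing $u=\sum_{i=1}^n x_i\ot y_i$ and $w=\sum_j \phi_j\ot\psi_j$, the pairing unwinds to $\langle\theta(u),w\rangle=\sum_{i,j}\phi_j(x_i)\psi_j(y_i)=\sum_i w(x_i,y_i)$, i.e. it is exactly the duality pairing of $u\in A\ot_\gamma B$ with $w$ regarded as a bounded bilinear form on $A\times B$. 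Moreover, by Goldstine's theorem the injective norm $\|w\|_\lambda$ (a supremum over the unit balls of $A^{**}$ and $B^{**}$) equals the supremum over the unit balls of $A$ and $B$, that is, the bilinear-form norm of $w$. Thus $A^*\ot_\lambda B^*$ embeds isometrically into $(A\ot_\gamma B)^*=B(A\times B,\C)$, and $\|\theta(u)\|=\sup\{|\langle u,w\rangle|:w\in A^*\ot B^*,\ \|w\|_{B(A\times B)}\le 1\}$. The lower bound is therefore precisely the assertion that $A^*\ot B^*$ is $\tfrac12$-norming for $A\ot_\gamma B$, and this is the step where the $C^*$-structure must enter, since no dimension-free constant is available for general Banach spaces.

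To produce a good norming element, fix $\varepsilon>0$ and pick $F\in B(A\times B,\C)$ with $\|F\|\le1$ and $|F(u)|\ge\|u\|_\gamma-\varepsilon$, using $(A\ot_\gamma B)^*=B(A\times B,\C)$. I would then feed $F$ into the noncommutative (Haagerup) Grothendieck inequality: there are states $f_1,f_2$ on $A$ and $g_1,g_2$ on $B$ with $|F(a,b)|\le (f_1(aa^*)+f_2(a^*a))^{1/2}(g_1(b^*b)+g_2(bb^*))^{1/2}$. Passing to the associated Hilbert spaces yields linear maps $a\mapsto\hat a\in H$, $b\mapsto\check b\in K$ with $\|\hat a\|\le\sqrt2\,\|a\|$, $\|\check b\|\le\sqrt2\,\|b\|$, and a contraction $T$ such that $F(a,b)=\langle T\hat a,\check b\rangle$. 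Compressing by the orthogonal projections $P,Q$ onto the finite-dimensional subspaces spanned by $\{\hat x_i\}$ and $\{\check y_i\}$ and setting $w(a,b):=\langle TP\hat a,\,Q\check b\rangle$, expansion in orthonormal bases exhibits $w$ as a finite sum $\sum_{k,l}\langle Te_k,f_l\rangle\,\phi_k\ot\psi_l$ with $\phi_k\in A^*$, $\psi_l\in B^*$, so $w\in A^*\ot B^*$.

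Finally, $P$ and $Q$ fix the spanning vectors, so $w(x_i,y_i)=F(x_i,y_i)$ for every $i$ and hence $\langle u,w\rangle=F(u)$; while $|w(a,b)|\le\|\hat a\|\,\|\check b\|\le 2\|a\|\,\|b\|$ gives $\|w\|_\lambda=\|w\|_{B(A\times B)}\le 2$. Therefore $\|\theta(u)\|\ge|\langle u,w\rangle|/\|w\|_\lambda\ge|F(u)|/2\ge(\|u\|_\gamma-\varepsilon)/2$, and letting $\varepsilon\to0$ yields $\|\theta(u)\|\ge\tfrac12\|u\|_\gamma$; combined with the upper bound this gives the stated estimate and bi-continuity. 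The hard part is exactly this lower bound: everything hinges on finding a norming bilinear form that factors through Hilbert space with a \emph{dimension-independent} constant, and the single factor of $2$ from the two-state/GNS estimate (one $\sqrt2$ on each side) is what upgrades the earlier constant $\tfrac14$ to $\tfrac12$. The only routine points left are the conjugate-Hilbert-space bookkeeping needed to keep the $\psi_l$ genuinely linear and the verification that the compressed $w$ lands in the algebraic tensor product $A^*\ot B^*$.
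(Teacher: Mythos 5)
Your argument is correct and is essentially the paper's proof: both start from a Hahn--Banach norming functional for $u$, replace it by a finite-rank bilinear form in $A^*\ot B^*$ of norm at most $2$ (the factor $2$ being exactly the contribution of Haagerup's noncommutative Grothendieck theorem), and then pair that form with $\theta(u)$. The only difference is in how the finite-rank form is produced --- the paper quotes Haagerup's point-norm approximation of operators $A\to B^*$ by finite-rank operators of norm at most $2\|\widetilde{T}\|$ and then applies an Auerbach basis, whereas you rederive the approximant directly from the two-state factorization through Hilbert space followed by compression; note only that your compression step as written needs $u$ in the algebraic tensor product, so one should close with a density/continuity remark (the paper's net argument handles general $u=\sum_{n=1}^{\infty}a_n\ot b_n$ directly).
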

\begin{pf}
The inequality of the right hand side follows directly from the definition of $\theta$.
Let $0\neq u\in A\ot_{\gamma}B$ and $\epsilon> 0$.
 By  the Hahn Banach Theorem,
 there exists $T\in (A\ot_{\gamma}B)^*$ with $\|T\|\leq 1$ such that $|T(u)|>\|u\|_{\gamma}- \epsilon$. Since $(A\ot_{\gamma}B)^*= B(A,B^*)$, so $T(a\ot b)=\widetilde{T}(a)(b)$,  for some $
\widetilde{T}\in B(A,B^*)$, for all $a\in A$ and $b\in B$ with $\|\widetilde{T}\|=\|T\| \leq 1$. By (~\cite{haag}, Proposition 2.1(2)), there is a net  $(\widetilde{T_{\alpha}})$ of finite rank operators from $A$ to $B^*$ such that $\|\widetilde{T_{\alpha}}\|\leq 2\|\widetilde{T}\|$ and $\ds\lim_{\alpha}\|\widetilde{T_{\alpha}}(x)-\widetilde{T}(x)\|= 0$ for any $x\in A$. Now, for each $\alpha$, corresponding to $\widetilde{T_{\alpha}}$
 we can associate $T_{\alpha}\in (A\ot_{\gamma}B)^*$. For $u \in A\ot_{\gamma}B$, there is  $\alpha_0$ such that $|T(u)-T_{\alpha}(u)|< \epsilon$ for all $\alpha \geq \alpha_0$.  Thus  $|T_{\alpha_0}(u)|> \|u\|_{\gamma}- 2\epsilon$. Since $\widetilde{T_{\alpha_0}}$ is a finite rank operator,
so let $dim(Range(\widetilde{T_{\alpha_0}}))=m< \infty$. Choose an Auerbach basis $\{\phi_1,\phi_2,\dots,\phi_m\}$ for $Range(\widetilde{T_{\alpha_0}})$ with associated coordinate functionals $F_1,F_2,\dots,F_m$ in $B^{**}$. Thus, for any $x\in A$, $\widetilde{T_{\alpha_0}}(x)=\ds\sum_{i=1}^{m}c_i\phi_i$, $c_i\in \mathbb{C}$ for $i=1,2,\dots ,m$. By using $F_i(\phi_j)=\delta_{ij}$,  it follows that  $\widetilde{T_{\alpha_0}}(x)= \ds\sum_{i=1}^{m}\psi_i (x)\phi_i$, $\psi_i:= F_i\circ \widetilde{T_{\alpha_0}}\in A^*$
  for $i=1,2,\dots,m$. Therefore, for any $x\in A$ and $y\in B$, $\widetilde{T_{\alpha_0}}(x)(y)=\ds\sum_{i=1}^{m}\psi_i (x)\phi_i(y)=S(\ds\sum_{i=1}^{m}\psi_i\ot \phi_i)(x)(y)$, where $S$ is the canonical isometric map from $A^*\ot_{\lambda}B^*$ to $B(A,B^{*})$. Thus $\|\widetilde{T_{\alpha_0}}\|=\|\ds\sum_{i=1}^{m}\psi_i\ot \phi_i\|_{\lambda}$ and so $\|\frac{1}{2} \ds\sum_{i=1}^{m}\psi_i\ot \phi_i\|_{\lambda}\leq 1$. Moreover, for $u=\ds\sum_{n=1}^{\infty}a_n \ot b_n$, we have $\|\theta(u)\|\geq | \theta(u)(\frac{1}{2} \ds\sum_{i=1}^{m}\psi_i\ot \phi_i)|= |\frac{1}{2} \ds\sum_{n=1}^{\infty}\widetilde{T_{\alpha_0}}(a_n)(b_n)|= |\frac{1}{2} T_{\alpha_0}(u)|> \frac{1}{2}\|u\|_{\gamma}- \epsilon$. Since $\epsilon >0$ is arbitrary, so
 $\|\theta(u)\|\geq \frac{1}{2} \|u\|_{\gamma}$.
\end{pf}

Next, we consider the map $\phi:  A^{**}\ot_{\gamma}B^{**}\to (A^*\ot_{\lambda}B^*)^*$ defined by $\phi=i' \circ J$.
\begin{prop}\label{on3}
For $C^*$-algebras $A$ and $B$, the natural map $\phi:  A^{**}\ot_{\gamma}B^{**}$ $\to (A^*\ot_{\lambda}B^*)^*$ is  bi-continuous and
$\frac{1}{2} \|u\|_{\gamma} \leq \|\phi(u)\| \leq \|u\|_{\gamma} $, for all $u\in A^{**}\ot_{\gamma}B^{**}$.
\end{prop}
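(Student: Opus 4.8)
The right-hand inequality is immediate: $J$ is a contraction from $A^{**}\ot_{\gamma}B^{**}$ into the nuclear bilinear forms and $i'$ is norm-decreasing into the integral forms, so $\|\phi(u)\|\le\|J(u)\|_N\le\|u\|_{\gamma}$. For the lower bound the plan is to rerun the argument of Lemma~\ref{on8} one level up, using that $A^{**}$ and $B^{**}$ are again $C^*$-algebras. Writing $(A^{**}\ot_{\gamma}B^{**})^*=B(A^{**},B^{***})$, I would fix $0\ne u=\ds\sum_n F_n\ot G_n$ with $\ds\sum_n\|F_n\|\|G_n\|<\|u\|_{\gamma}+\epsilon$ and use Hahn--Banach to produce $\widetilde{T}:A^{**}\to B^{***}$ with $\|\widetilde{T}\|\le1$ and $|T(u)|>\|u\|_{\gamma}-\epsilon$, where $T(u)=\ds\sum_n\widetilde{T}(F_n)(G_n)$. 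After truncating the series at some $N$ so that $\ds\sum_{n>N}\|F_n\|\|G_n\|$ is negligible, Haagerup's Proposition 2.1(2) (legitimate, since $A^{**}$ is a $C^*$-algebra and $B^{***}$ a dual space) yields a finite-rank $\widetilde{T_{\alpha_0}}$ with $\|\widetilde{T_{\alpha_0}}\|\le2$ and $|\ds\sum_{n\le N}\widetilde{T_{\alpha_0}}(F_n)(G_n)|>\|u\|_{\gamma}-2\epsilon$; an Auerbach basis of its range writes $\widetilde{T_{\alpha_0}}=\ds\sum_{i=1}^m\psi_i\ot\phi_i$.

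The main obstacle appears exactly here. In Lemma~\ref{on8} the range of the finite-rank operator sat inside $B^*$ and the coefficient functionals automatically lay in $A^*$, so the Auerbach data produced a test tensor in $A^*\ot_{\lambda}B^*$; now, however, $\psi_i\in A^{***}$ and $\phi_i\in B^{***}$, so $\ds\sum_i\psi_i\ot\phi_i$ lives only in $A^{***}\ot_{\lambda}B^{***}$, whereas $\phi(u)$ is a functional on the far smaller $A^*\ot_{\lambda}B^*$. The crux is therefore to push this test element down into the preduals without losing norm control. I would carry this out with the principle of local reflexivity: applied to $\mathrm{span}\{\psi_i\}\subseteq A^{***}=(A^*)^{**}$ against the finite set $\{F_n\}_{n\le N}\subseteq A^{**}=(A^*)^*$ it gives a $(1+\epsilon)$-isometry $R':\mathrm{span}\{\psi_i\}\to A^*$ with $F_n(R'\psi_i)=\psi_i(F_n)$, and symmetrically a $(1+\epsilon)$-isometry $R:\mathrm{span}\{\phi_i\}\to B^*$ with $G_n(R\phi_i)=\phi_i(G_n)$.

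Setting $v=\ds\sum_i R'\psi_i\ot R\phi_i\in A^*\ot_{\lambda}B^*$, injectivity of $\lambda$ together with the near-isometry of $R',R$ gives $\|v\|_{\lambda}\le(1+\epsilon)^2\|\ds\sum_i\psi_i\ot\phi_i\|_{\lambda}=(1+\epsilon)^2\|\widetilde{T_{\alpha_0}}\|\le2(1+\epsilon)^2$ (the isometry $\|\ds\sum_i\psi_i\ot\phi_i\|_{\lambda}=\|\widetilde{T_{\alpha_0}}\|$ being the bidual analogue of the map $S$ from Lemma~\ref{on8}), while the matching relations force $\phi(u)(v)$ to reproduce $\ds\sum_{n\le N}\widetilde{T_{\alpha_0}}(F_n)(G_n)$ up to the tail $\ds\sum_{n>N}$, which is bounded by $\|v\|_{\lambda}\ds\sum_{n>N}\|F_n\|\|G_n\|$ and hence negligible. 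Rescaling $v$ into the unit ball of $A^*\ot_{\lambda}B^*$ and letting $\epsilon\to0$, $N\to\infty$ then yields $\|\phi(u)\|\ge\frac12\|u\|_{\gamma}$. The genuinely hard point is precisely this descent to the preduals \emph{with} simultaneous control of the injective norm: Goldstine's theorem would supply the pointwise identities $F_n(R'\psi_i)=\psi_i(F_n)$ but not the bound on $\|v\|_{\lambda}$, which is why local reflexivity, being nearly isometric on the relevant finite-dimensional spans, is the correct instrument.
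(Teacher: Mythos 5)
Your argument is correct, but it handles the crucial step by a genuinely different device than the paper. Both proofs start identically: apply Lemma~\ref{on8} with $A$, $B$ replaced by the $C^*$-algebras $A^{**}$, $B^{**}$, which produces a map $\theta: A^{**}\ot_{\gamma}B^{**}\to (A^{***}\ot_{\lambda}B^{***})^*$ satisfying the two-sided bound. The divergence lies in how one gets from functionals on $A^{***}\ot_{\lambda}B^{***}$ back down to functionals on $A^{*}\ot_{\lambda}B^{*}$. The paper does this softly: it cites the fact that $T\mapsto T^{**}$ is an isometric inclusion of the integral operators $J(A^{*},B^{**})$ into $J(A^{***},B^{****})$ (\cite{ryan}, Proposition 3.21), checks on elementary tensors, via separate $w^*$-continuity, that $j'\circ\phi=\theta$, and reads off $\|\phi(u)\|=\|\theta(u)\|\geq\frac{1}{2}\|u\|_{\gamma}$. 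You instead reopen the proof of Lemma~\ref{on8} one level up and, at the point where the Auerbach data $\psi_i\in A^{***}$, $\phi_i\in B^{***}$ yield a test tensor living one dual too high, push it down to $A^{*}\ot_{\lambda}B^{*}$ by local reflexivity against the finite sets $\{F_n\}_{n\leq N}$, $\{G_n\}_{n\leq N}$, with functoriality and injectivity of $\lambda$ keeping $\|v\|_{\lambda}\leq 2(1+\epsilon)^2$. That argument is complete: the matching identities plus the tail bound $\|v\|_{\lambda}\sum_{n>N}\|F_n\|\|G_n\|$ do what you claim, and you correctly locate the real difficulty (Goldstine alone gives the pointwise identities but no control of $\|v\|_{\lambda}$). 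The paper's route buys brevity, since the entire descent is absorbed into one cited isometry and no truncation or $\epsilon$-bookkeeping is required; your route buys self-containedness and makes visible where reflexivity-type information enters --- indeed the cited isometry for integral forms is itself proved by essentially the same local-reflexivity/weak*-density considerations, so the two proofs use the same underlying principle in different packaging.
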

\begin{pf}
By the above lemma, we have a map $\theta:A^{**}\ot_{\gamma}B^{**}\to J(A^{***}, B^{****} )$  with $\frac{1}{2} \|u\|_{\gamma} \leq \|\theta(u)\| \leq \|u\|_{\gamma} $ for all $u\in A^{**}\ot_{\gamma}B^{**}$.  Also, (~\cite{ryan}, Proposition 3.21) shows that  the natural inclusion map $j' :  J(A^{*}, B^{**} )\to  J(A^{***}, B^{****} )\\(T\to T^{**})$ is isometric. We will show that $j'\circ \phi=\theta$.
For $F\in A^{**} $, $G\in B^{**} $, $\tilde{F}\in A^{***} $ and $\tilde{G}\in B^{***}$,\\
 \hspace*{1.85 cm }$j'\circ \phi(F \ot G)(\tilde{F})(\tilde{G})=j'( \phi(F \ot G))(\tilde{F})(\tilde{G})$,\\
 \hspace*{5.3 cm } $= [\phi(F \ot G)]^{**}(\tilde{F})(\tilde{G})= \tilde{F}([\phi(F \ot G)]^{*}(\tilde{G}))$,\\
\hspace*{5.45 cm }$=\tilde{F}(F)\tilde{G}(G)$,\\
since   $[\phi(F \ot G)]^{*}(\tilde{G})(f)= \tilde{G}(\phi(F \ot G)(f))= \tilde{G}(G)F(f)$ for $f\in A^*$. Thus $j'\circ \phi(F \ot G)(\tilde{F})(\tilde{G})=\theta(F\ot G)(\tilde{G})(\tilde{F})$. Therefore, by linearity and continuity, $j'\circ \phi=\theta$, and hence the map $\phi$ satisfies $\frac{1}{2} \|u\|_{\gamma} \leq \|\phi(u)\| \leq \|u\|_{\gamma} $ for all $u\in A^{**}\ot_{\gamma}B^{**}$.
\end{pf}

Haagerup proved that  every bounded bilinear form on $A\times B$ can be extended uniquely to a separately normal norm preserving bounded bilinear form on  $A^{**}\times B^{**}$ (~\cite{haag}, Corollary 2.4), so we have a continuous isometric map $\chi: (A\ot_{\gamma}B)^*\to (A^{**}\ot_{\gamma}B^{**})^*$. Set $\mu= \chi^*\circ i: A^{**}\ot_{\gamma}B^{**} \to (A\ot_{\gamma}B)^{**}$, where $i$ is the natural embedding of $A^{**}\ot_{\gamma}B^{**}$ into $(A^{**}\ot_{\gamma}B^{**})^{**}$. Kumar and Sinclair proved that this embedding is a bi-continuous map with lower bound $\frac{1}{4}$ (~\cite{Ajay}, Theorem 5.1). We re-establish its bi-continuity with an alternate proof and an improved lower bound $\frac{1}{2}$.
\begin{thm}\label{on5}
For  $C^*$-algebras $A$ and  $B$,    the natural embedding $\mu$ satisfies $\frac{1}{2} \|u\|_{\gamma} \leq \|\mu(u)\| \leq \|u\|_{\gamma} $
for all $u\in A^{**}\ot_{\gamma}B^{**}$.
\end{thm}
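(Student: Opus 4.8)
The plan is to obtain both inequalities by factoring $\mu$ through the map $\phi$ of Proposition~\ref{on3}, whose lower bound $\tfrac12\|u\|_{\gamma}\le\|\phi(u)\|$ is already in hand; the whole difficulty is then concentrated in producing a single contraction that converts $\mu$ into $\phi$. The upper estimate costs nothing: by Haagerup's extension theorem the map $\chi$ is isometric, so its adjoint $\chi^*$ is a contraction, and since $i$ is an isometric embedding into the bidual, $\mu=\chi^*\circ i$ is contractive and $\|\mu(u)\|\le\|u\|_{\gamma}$ for every $u$.

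For the lower estimate I would use the canonical isometry $S\colon A^*\ot_{\lambda}B^*\to B(A,B^*)$ already appearing in the proof of Lemma~\ref{on8}. Because $(A\ot_{\gamma}B)^*=B(A,B^*)$, the map $S$ embeds $A^*\ot_{\lambda}B^*$ isometrically inside $(A\ot_{\gamma}B)^*$, and hence its Banach-space adjoint $S^*\colon(A\ot_{\gamma}B)^{**}\to(A^*\ot_{\lambda}B^*)^*$ is a contraction. The heart of the argument is the claim $S^*\circ\mu=\phi$. To verify it, it suffices (by bilinearity, density of $A^{**}\ot B^{**}$ in $A^{**}\ot_{\gamma}B^{**}$, density of $A^*\ot B^*$ in $A^*\ot_{\lambda}B^*$, and continuity) to test both sides on an elementary tensor $F\ot G$ with $F\in A^{**}$, $G\in B^{**}$, evaluated at a finite tensor $w=\ds\sum_i f_i\ot g_i\in A^*\ot B^*$. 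Unravelling the definitions, $\langle S^*\mu(F\ot G),w\rangle=\langle\mu(F\ot G),S(w)\rangle=\langle i(F\ot G),\chi(S(w))\rangle=\chi(S(w))(F\ot G)$, where $\chi(S(w))$ is the separately normal extension to $A^{**}\times B^{**}$ of the finite-rank bilinear form $(a,b)\mapsto\langle S(w)(a),b\rangle=\ds\sum_i f_i(a)g_i(b)$.

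Now $\ds\sum_i F(f_i)G(g_i)$ is, for fixed $G$, a finite linear combination of the weak\textsuperscript{*}-continuous functionals $F\mapsto F(f_i)$ on $A^{**}$, and symmetrically in $G$, so it is separately weak\textsuperscript{*}-continuous, and on $A\times B$ it reduces to $\ds\sum_i f_i(a)g_i(b)$; hence it is a separately normal extension of the same form, and uniqueness in Haagerup's theorem forces $\chi(S(w))(F\ot G)=\ds\sum_i F(f_i)G(g_i)$. On the other hand $\langle\phi(F\ot G),w\rangle=\ds\sum_i J(F\ot G)(f_i,g_i)=\ds\sum_i F(f_i)G(g_i)$, so the two sides agree and $S^*\circ\mu=\phi$. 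With the factorization established the proof closes at once: for $u\in A^{**}\ot_{\gamma}B^{**}$, Proposition~\ref{on3} together with $\|S^*\|\le1$ gives $\tfrac12\|u\|_{\gamma}\le\|\phi(u)\|=\|S^*(\mu(u))\|\le\|\mu(u)\|$, which with the upper bound yields the stated inequalities.

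I expect the only genuinely delicate point to be the identification $\chi(S(w))(F\ot G)=\ds\sum_i F(f_i)G(g_i)$: one must be careful that the obvious weak\textsuperscript{*}-continuous extension really is the separately normal extension singled out by Haagerup's uniqueness statement, and that the pairing defining $S^*$ is computed against the correct duality between $(A\ot_{\gamma}B)^{**}$ and $(A\ot_{\gamma}B)^*=B(A,B^*)$. Everything else is bookkeeping with the definitions of $\mu$, $\phi$, $J$ and $S$.
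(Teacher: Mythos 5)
Your proposal is correct and follows essentially the same route as the paper: the map you call $S$ is the paper's embedding $j\colon A^*\ot_{\lambda}B^*\to (A\ot_{\gamma}B)^*$, and the key identity $S^*\circ\mu=\phi$ is exactly the paper's $j^*\circ\mu=\phi$, after which both arguments conclude via Proposition~\ref{on3}. The only cosmetic difference is in verifying that identity: the paper approximates $F$ and $G$ by nets from $A$ and $B$ via Goldstine and uses separate $w^*$-continuity, whereas you invoke the uniqueness clause of Haagerup's extension theorem to identify $\chi(S(w))$ with the obvious $w^*$-continuous extension --- these are equivalent.
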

\begin{pf}
We know that  the natural embedding $j:A^*\ot ^{\lambda} B^* \to B(A \times B,\C)=(A \ot_{\gamma} B)^{*}$ is   isometric. Thus, by the Hahn Banach theorem, $j^*: (A \ot_{\gamma} B)^{**}\to (A^*\ot ^{\lambda} B^*)^*$ is a quotient map. We  will show that
$j^* \circ \mu=\phi$, where $\phi$ is as in Proposition \ref{on3}. Since $j^* \circ \mu$  and $\phi$ are linear and continuous, it suffices to show that
$j^* \circ \mu$  and $\phi$ agree on $A^{**}\otimes B^{**}$. Note that, for $F\in A^{**}$, $G\in B^{**}$,  $f\in A^*$ and $g\in B^*$, \\
\hspace*{2.4 cm}$j^* \circ \mu(F \ot G)(f\ot g)= j^* (\chi ^{*}\widehat{(F \ot G)}) (f\ot g)$\\
\hspace*{5.8 cm} $=\chi ^{*}\widehat{(F \ot G)}(j(f\ot g))$\\
\hspace*{5.8 cm} $=\chi (j(f\ot g))(F\ot G)$\\
\hspace*{5.8 cm} $=\widetilde{\chi (j(f\ot g))}(F\times G) ,$\\
where $\widetilde{\chi (j(f\ot g))}$ is the bilinear form corresponding to $\chi (j(f\ot g))\in (A^{**}\widehat{\ot}B^{**})^*$.

Since $F\in A^{**}$ and  $G\in B^{**}$ so, by Goldstine's Lemma, there are  nets $x_{\lambda} \in A$  and $y_{\mu} \in B$ such that $\widehat{x_{\lambda}}$ converges to $F$ in $\sigma(A^{**}, A^{*})$ and $\widehat{y_{\mu}}$ converges to $G$ in $\sigma(B^{**}, B^{*})$.
 The  separate
$w^*$-continuity of the bilinear form $\widetilde{\chi (j(f\ot g))}$ and the equality $\widetilde{\chi (j(f\ot g))}(\widehat{x_{\lambda}} ,\widehat{y_{\mu}})= \widehat{x_{\lambda}}(f)\widehat{y_{\mu}}(g)$ shows that $j^* \circ \mu(F \ot G)(f\ot g)=\phi(F\ot G)(f\ot g) $. Thus,   $j^* \circ \mu=\phi$. Hence, by Proposition  \ref{on3}, we deduce that $\frac{1}{2} \|u\|_{\gamma} \leq \|\mu(u)\| \leq \|u\|_{\gamma} $.
\end{pf}

\begin{rem}\label{on11}
\emph{(i)} Note  that, for a $C^*$-algebra $A$ having Completely positive approximation property, the canonical embedding of $A^{**}\ot_{\min}B^{**}$ into $(A\ot_{\min} B)^{**}$ is  isometric  by \emph{(~\cite{lance}, Theorem 3.6)} and \emph{(~\cite{Archb}, Theorem 3.6)}. However, for  the largest Banach space tensor norm, the embedding $\mu$ is  isometic if   one of the $C^*$-algebra  has the metric approximation property, which follows  directly by using   \emph{(~\cite{ryan}, Theorem 4.14)} in the above theorem.\\
\emph{(ii)} For a locally compact  Hausdorff topological group $G$, let \emph{$C^{*}(G)$} and \emph{$C^{*}_{r}(G)$}
 be the group $C^{*}$-algebra and the reduced group $C^{*}$-algebra  of $G$, respectively. Then,
  for any $C^*$-algebra $A$ and a discrete amenable group $G$,  the natural embedding of $C^{*}_r(G)^{**}\otimes_{\gamma} A^{**}$ into $(C^{*}_r(G)\otimes_{\gamma} A)^{**}$ is isometric   by \emph{(~\cite{lance}, Theorem 4.2)}; and for any amenable group $G$, the natural embedding of $C^{*}(G)^{**}\otimes_{\gamma} A^{**}$ into $(C^{*}(G)\otimes_{\gamma} A)^{**}$ is isometric   by \emph{(~\cite{lance}, Proposition 4.1)}.\\
\emph{(iii)}   The natural embedding $\mu$ is  isomorphism  if   $A^*$ has the approximation property,  $A^{**}$ has the Radon Nikodym property
and every bilinear form on $A\times B$ is  nuclear. This follows directly by observing that if  $A^{**}$ has the Radon Nikodym property then  \emph{(~\cite{ryan}, Theorem 5.32)} gives us  $N(B^*, A^{**})=PJ(B^*, A^{**})=J(B^*, A^{**})$, where $PJ(B^*, A^{**})$ and $N(B^*, A^{**})$ denote the Pietsch integral and nuclear  operators from $B^*$ to $A^{**}$, respectively~\cite{ryan}. Clearly,  bijectivity follows  if we show that $j$ is an onto map. For this, let $T\in B(A\times B,\C)$ so it is nuclear. Since $A^*$ has the approximation property, so
there exists an element $u\in A^{*}\ot_{\gamma}B^{*}$ such that $J(u)=T$, where $J$ is an isometric isomorphism from $A^{*}\ot_{\gamma}B^{*}$ to $B(A\times B,\C)$ \emph{(~\cite{ryan}, Corollary 4.8)}. Consider the canonical map $i:A^{*}\ot_{\gamma}B^{*}\to A^{*}\ot_{\lambda}B^{*} $. Of course $j\circ i=J$ on $A^{*}\otimes B^{*}$, and hence by linearity and continuity $j\circ i=J$.
\end{rem}

We now discuss  the operator space version of the above embedding. Note  that in this case the embedding is  positive, and becomes an isomorphism under the conditions weaker than that required in case of the Banach space projective tensor product.  For operator spaces $V$ and $W$, an operator  from  $V$ into $W$ is called completely nuclear if
it lies in the image of the map $J: V^* \widehat{\ot} W\to V^* \check{\ot}  W$~\cite{effros}. The space of completely nuclear operators will be denoted by $CN(V,W)$.  This space has the natural operator space structure determined by the identification $CN(V,W)\cong \frac{V^* \widehat{\ot} W}{\ker(J)}$. For $C^*$-algebras $A$ and $B$,  consider the  map $\theta$ from $A\widehat{\ot}B$ into the dual of operator space injective tensor product $(A^{*} \check{\ot} B^{*})^*$ given by $\theta = S \circ J \circ i$, where $i: A\widehat{\ot}B \to A^{**}\widehat{\ot}B^{**}$  is the natural completely isometric map, $J: A^{**}\widehat{\ot}B^{**}
\to CN( A^*, B^{**})$ and $S: CN( A^*, B^{**}) \to (A^{*} \check{\ot} B^{*})^*$~\cite{effros}.  Making use of the fact that the dual of the operator space projective tensor norm is the operator space injective (~\cite{effros}, Proposition 8.1.2) and an application of Grothendieck's theorem for jcb (~\cite{r1}, Proposition 1) and the techniques of  Lemma \ref{on8}, we obtain the following:



\begin{lem}
For $C^*$-algebras $A$ and $B$, the canonical map $\theta:A\widehat{\ot} B\to
  (A^{*} \check{\ot} B^{*})^* $  satisfies $\frac{1}{2} \|u\|_{\wedge} \leq \|\theta(u)\| \leq \|u\|_{\wedge} $ for all $u\in A\widehat{\ot} B$. In particular, $\theta$ is  bi-continuous.
\end{lem}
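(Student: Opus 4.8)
The plan is to transcribe the proof of Lemma~\ref{on8} into the operator-space category, replacing each Banach-space device by its completely bounded analogue. The upper estimate $\|\theta(u)\| \le \|u\|_{\wedge}$ is immediate from the definition $\theta = S \circ J \circ i$: the map $i$ is completely isometric, $J$ is a complete quotient onto the completely nuclear operators, and $S$ is the canonical contraction $CN(A^*,B^{**}) \to (A^*\check{\ot} B^*)^*$ arising from the duality of the operator-space injective tensor product (Proposition~8.1.2 of~\cite{effros}); hence $\theta$ is a composition of contractions.

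For the lower estimate, fix $0 \ne u \in A\widehat{\ot} B$ and $\epsilon > 0$, and use the Hahn--Banach theorem to produce $T \in (A\widehat{\ot} B)^*$ with $\|T\| \le 1$ and $|T(u)| > \|u\|_{\wedge} - \epsilon$. The structural input that replaces the identity $(A\og B)^* = B(A,B^*)$ is the completely isometric identification $(A\widehat{\ot} B)^* = JCB(A\times B,\C) = CB(A,B^*)$; under it $T$ corresponds to $\widetilde{T} \in CB(A,B^*)$ with $\|\widetilde{T}\|_{cb} = \|T\| \le 1$.

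Next I would invoke Grothendieck's theorem for jcb forms (\cite{r1}, Proposition~1) precisely where Lemma~\ref{on8} used Haagerup's finite-rank approximation: it yields a net $(\widetilde{T_{\alpha}})$ of completely bounded finite-rank maps from $A$ to $B^*$ with $\|\widetilde{T_{\alpha}}\|_{cb} \le 2\|\widetilde{T}\|_{cb}$ and $\widetilde{T_{\alpha}}(x) \to \widetilde{T}(x)$ for every $x \in A$. Passing to the associated functionals $T_{\alpha} \in (A\widehat{\ot} B)^*$ and arguing as in Lemma~\ref{on8} through the bounded pointwise convergence $\widetilde{T_{\alpha}} \to \widetilde{T}$, I can choose $\alpha_0$ with $|T_{\alpha_0}(u)| > \|u\|_{\wedge} - 2\epsilon$. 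Since $\widetilde{T_{\alpha_0}}$ has finite rank, write $\widetilde{T_{\alpha_0}} = \ds\sum_{i=1}^m \psi_i \ot \phi_i$ with $\psi_i \in A^*$, $\phi_i \in B^*$; on finite-rank operators the cb-norm agrees with the operator-space injective tensor norm of the corresponding element of $A^*\check{\ot} B^*$ (the completely isometric embedding $A^*\check{\ot} B^* \hookrightarrow CB(A,B^*)$), whence $\|\frac{1}{2}\ds\sum_{i=1}^m \psi_i \ot \phi_i\|_{\check{\ot}} \le 1$. Pairing then gives $\|\theta(u)\| \ge |\theta(u)(\frac{1}{2}\ds\sum_{i=1}^m \psi_i \ot \phi_i)| = \frac{1}{2}|T_{\alpha_0}(u)| > \frac{1}{2}\|u\|_{\wedge} - \epsilon$, and letting $\epsilon \to 0$ yields $\|\theta(u)\| \ge \frac{1}{2}\|u\|_{\wedge}$.

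The hard part will be the two operator-space inputs that have no direct Banach counterpart. First, one must check that Grothendieck's theorem for jcb actually delivers a factor-$2$ \emph{finite-rank} cb approximation, not merely a Hilbert-space factorization estimate; the approximating net has to be extracted from that factorization while preserving the cb-norm bound. Second, the identification of the cb-norm of the finite-rank map $\widetilde{T_{\alpha_0}}$ with $\|\ds\sum_{i=1}^m \psi_i \ot \phi_i\|_{\check{\ot}}$ must be verified completely isometrically, as it is the operator-space replacement for the isometry $A^*\ot_{\lambda} B^* \hookrightarrow B(A,B^*)$ underlying Lemma~\ref{on8}. Once these two facts are in hand, the remaining steps are formally identical to the Banach-space argument.
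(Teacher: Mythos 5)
Your proposal is correct and follows essentially the same route as the paper: the paper in fact gives no written proof of this lemma, stating only that it follows from the duality $(A\widehat{\ot}B)^{*}=CB(A,B^{*})$ (Proposition 8.1.2 of Effros--Ruan), an application of Grothendieck's theorem for jcb bilinear forms, and the techniques of Lemma~\ref{on8} --- which is precisely the recipe you carry out. The two points you flag as needing verification (the factor-$2$ finite-rank cb approximation and the complete isometry $A^{*}\check{\ot}B^{*}\hookrightarrow CB(A,B^{*})$ on finite-rank elements) are exactly the ingredients the paper leaves implicit in its citations.
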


\begin{prop}\label{on14}
For $C^*$-algebras $A$ and $B$, the natural map $\phi:  A^{**}\widehat{\otimes} B^{**}\to (A^*\check{\otimes} B^*)^*$, defined by $\phi=S\circ J$,  is   bi-continuous  satisfying
$\frac{1}{2} \|u\|_{\wedge}  \leq \|\phi(u)\| \leq \|u\|_{\wedge}  $ for all $u\in A^{**}\widehat{\ot}B^{**}$.
\end{prop}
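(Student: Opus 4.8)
The plan is to transcribe the proof of Proposition \ref{on3} into the operator space framework, replacing the Banach space injective and projective norms by their operator space analogues and Lemma \ref{on8} by the lemma just proved. Applying that lemma to the $C^*$-algebras $A^{**}$ and $B^{**}$ produces a map $\theta: A^{**}\widehat{\ot}B^{**}\to (A^{***}\check{\ot}B^{***})^*$ (the completely integral operators from $A^{***}$ to $B^{****}$) satisfying $\frac{1}{2}\|u\|_{\wedge}\leq \|\theta(u)\|\leq \|u\|_{\wedge}$ for all $u\in A^{**}\widehat{\ot}B^{**}$. Since these are exactly the estimates we want for $\phi$, it is enough to factor $\theta$ through $\phi$ by an isometric map and read off the bounds.

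The key ingredient I would need is the operator space counterpart of (\cite{ryan}, Proposition 3.21): the natural map $j': (A^*\check{\ot}B^*)^*\to (A^{***}\check{\ot}B^{***})^*$, which under the identification of these duals with spaces of completely integral operators is given by bitransposition $T\mapsto T^{**}$, is isometric. I expect this to be the main obstacle, since, in contrast to the purely formal verifications that follow, it rests on the local theory of completely integral operators---in particular the identification of $(V\check{\ot}W)^*$ with the completely integral operators from $V$ to $W^*$ and the stability of the completely integral norm under the second-adjoint embeddings $A^*\hookrightarrow A^{***}$ and $B^{**}\hookrightarrow B^{****}$; the relevant machinery is in \cite{effros}.

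Granting this, I would check the identity $j'\circ\phi = \theta$ on the algebraic tensor product $A^{**}\ot B^{**}$ by the computation used in Proposition \ref{on3}. For $F\in A^{**}$, $G\in B^{**}$, $\tilde F\in A^{***}$ and $\tilde G\in B^{***}$,
\[ j'\circ\phi(F\ot G)(\tilde F)(\tilde G) = [\phi(F\ot G)]^{**}(\tilde F)(\tilde G) = \tilde F\big([\phi(F\ot G)]^{*}(\tilde G)\big) = \tilde F(F)\,\tilde G(G), \]
where the last step uses $[\phi(F\ot G)]^{*}(\tilde G)(f) = \tilde G(G)F(f)$ for $f\in A^*$; the resulting expression equals $\theta(F\ot G)(\tilde G)(\tilde F)$. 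By linearity and the continuity of both maps ($\phi = S\circ J$ is a composite of continuous maps and $j'$ is isometric), the identity $j'\circ\phi = \theta$ extends to all of $A^{**}\widehat{\ot}B^{**}$.

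Finally, since $j'$ is isometric, $\|\phi(u)\| = \|j'(\phi(u))\| = \|\theta(u)\|$ for every $u\in A^{**}\widehat{\ot}B^{**}$, so $\phi$ inherits the inequalities $\frac{1}{2}\|u\|_{\wedge}\leq \|\phi(u)\|\leq \|u\|_{\wedge}$ and is in particular bi-continuous. Thus the only substantive new ingredient beyond the Banach space argument is the isometry of bitransposition on completely integral operators; the remainder is a formal transcription of Proposition \ref{on3}.
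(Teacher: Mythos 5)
Your proposal is correct and follows essentially the same route as the paper: the paper likewise reduces to the argument of Proposition~\ref{on3}, using local reflexivity of $A^*$ (\cite{effros}, Theorem 15.3.1) and Theorem 14.3.1 of \cite{effros} to identify $(A^*\check{\ot}B^*)^*$ with the completely integral operators $I(A^*,B^{**})$, and then invokes (\cite{effros}, Proposition 15.4.4) for exactly the isometry of bitransposition on completely integral operators that you single out as the key ingredient. No substantive difference.
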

\begin{proof}
By (~\cite{effros}, Theorem 15.3.1) we have $A^{*}$ is locally reflexive operator space. Therefore, (~\cite{effros}, Theorem 14.3.1) implies  that $(A^*\check{\otimes} B^*)^*$ can be identified with $I(A^{*}, B^{**})$, where $I(A^{*}, B^{**})$ denotes the space of completely integral operators from $A^{*}$ to  $B^{**}$. Now, the result follows by using the techniques of  Proposition \ref{on3} and  (~\cite{effros}, Proposition 15.4.4).
\end{proof}

By (~\cite{r2}, Proposition 2.5), we have a continuous completely isometric map $\chi: (A\widehat{\otimes}B)^*\to (A^{**}\widehat{\otimes}B^{**})^*$. Let $\mu= \chi^*\circ i: A^{**}\widehat{\otimes}B^{**} \to (A\widehat{\otimes}B)^{**}$, where $i$ is the natural embedding of $A^{**}\widehat{\otimes}B^{**}$ into $(A^{**}\widehat{\otimes}B^{**})^{**}$. Then clearly  $\|\mu\|_{cb}\leq 1 $.

For  a matrix ordered space $A$ and its dual space  $A^*$, we define $^*$-operation  on $A^*$  by $f^*(x)=\overline{f(x^*)}$, $x\in A$ and  $M_n(A^*)^{+}=\{\phi\in CB(A,M_n): \phi \;\;\text{is completely positive}\}$. Note that, for  $C^*$-algebras $A$ and $B$, $A\widehat{\ot} B$ is  a Banach $^*$-algebra (~\cite{kumar}, Proposition 3).

\begin{thm}
For  $C^*$-algebras $A$ and  $B$,    the natural embedding $\mu$  is   $^*$-preserving  positive completely bounded  map which satisfies $\frac{1}{2} \|u\|_{\wedge} \leq \|\mu(u)\| \leq \|u\|_{\wedge} $
for all $u\in A^{**}\widehat{\otimes} B^{**}$.
\end{thm}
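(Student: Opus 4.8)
The plan is to treat the four assertions separately: the two-sided norm estimate and the complete boundedness are obtained exactly as in the Banach-space case of Theorem \ref{on5}, and the order-theoretic statements ($^*$-preserving and positive) are then grafted on.

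First I would establish the norm bound and complete boundedness by transporting the argument of Theorem \ref{on5} to the operator-space setting. Since the dual of the operator space projective tensor norm is the operator space injective tensor norm (\cite{effros}, Proposition 8.1.2), the natural map $j\colon A^*\check{\ot} B^*\to (A\widehat{\ot} B)^*$ is completely isometric, so $j^*\colon (A\widehat{\ot} B)^{**}\to (A^*\check{\ot} B^*)^*$ is a complete quotient map, in particular a contraction. The key identity is $j^*\circ\mu=\phi$, with $\phi$ the map of Proposition \ref{on14}; as in the proof of Theorem \ref{on5} it suffices to check it on an elementary tensor $F\ot G$, which follows from the formula $j^*\circ\mu(F\ot G)(f\ot g)=\widetilde{\chi(j(f\ot g))}(F,G)$ together with the separate weak$^*$-continuity of the extended bilinear form and Goldstine's lemma. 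Given the identity, Proposition \ref{on14} yields $\tfrac12\|u\|_{\wedge}\le\|\phi(u)\|=\|j^*\mu(u)\|\le\|\mu(u)\|$, while the upper bound $\|\mu(u)\|\le\|u\|_{\wedge}$ and the complete boundedness are already contained in the estimate $\|\mu\|_{cb}\le1$ recorded before the theorem.

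Next I would prove that $\mu$ is $^*$-preserving by testing on elementary tensors. With the dual involution $\Psi^*(w)=\overline{\Psi(w^*)}$ on $(A\widehat{\ot} B)^*$ and the computation $\mu(F\ot G)(\Psi)=\widetilde{\chi(\Psi)}(F,G)$, the claim $\mu((F\ot G)^*)=\mu(F\ot G)^*$ reduces to the single identity $\widetilde{\chi(\Psi^*)}(F,G)=\overline{\widetilde{\chi(\Psi)}(F^*,G^*)}$. On $A\times B$ this is immediate from the definitions of the involutions on $A^*$, $B^*$ and on the Banach $^*$-algebra $A\widehat{\ot} B$ (\cite{kumar}, Proposition 3). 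To pass to $A^{**}\times B^{**}$ I would take, by Goldstine's lemma, nets $\widehat{x_\lambda}\to F$ and $\widehat{y_\mu}\to G$ weak$^*$, observe that the involutions on $A^{**}$ and $B^{**}$ are weak$^*$-continuous (so $\widehat{x_\lambda^*}\to F^*$ and $\widehat{y_\mu^*}\to G^*$), and invoke the separate normality of $\widetilde{\chi(\Psi)}$ and $\widetilde{\chi(\Psi^*)}$ to carry the finite-level identity to the limit. Since the algebraic tensors $F\ot G$ span a dense subspace and the relation $\mu(w^*)=\mu(w)^*$ is closed under addition, conjugate scaling, and norm limits, it then holds on all of $A^{**}\widehat{\ot} B^{**}$.

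Finally, for positivity I would dualize: from $\mu(u)(\Psi)=\chi(\Psi)(u)$ it suffices to show that $\chi$ sends a positive functional $\Psi$ on $A\widehat{\ot} B$ to a positive functional $\chi(\Psi)$ on $A^{**}\widehat{\ot} B^{**}$, for then $\chi(\Psi)(u)\ge0$ whenever $u\ge0$. Positivity of $\Psi$ is the positive-definiteness $\sum_{i,j}\widetilde{\Psi}(a_i^*a_j,\,b_i^*b_j)=\Psi\big((\textstyle\sum_i a_i\ot b_i)^*(\sum_i a_i\ot b_i)\big)\ge0$, and I want the same with $a_i,b_i$ replaced by $F_i\in A^{**}$, $G_i\in B^{**}$. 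The main obstacle is exactly here: this condition couples the variables through the products $F_i^*F_j$ and $G_i^*G_j$, and multiplication in the von Neumann algebras $A^{**}$, $B^{**}$ is only separately weak$^*$-continuous, so no single joint weak$^*$-approximation of all the products is available. I would circumvent this by using that nonnegativity is a closed condition: approximating each $F_i$, $G_i$ by bounded nets from $A$, $B$ (Kaplansky density) and peeling off the limits one index at a time via the separate normality of $\widetilde{\chi(\Psi)}$ and the separate weak$^*$-continuity of multiplication, so that $\sum_{i,j}\widetilde{\chi(\Psi)}(F_i^*F_j,G_i^*G_j)$ is realized as an iterated limit of the manifestly nonnegative finite-level sums above. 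An equivalent and perhaps cleaner route is to run the GNS construction for $\Psi$, extend the resulting $^*$-representation normally to the biduals, and read off $\chi(\Psi)(W^*W)=\|\bar{\pi}(W)\xi\|^2\ge0$ directly.
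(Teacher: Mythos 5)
Your treatment of the norm estimate (the identity $j^*\circ\mu=\phi$ combined with Proposition \ref{on14}) and of the $^*$-preservation (reduction to $\chi(f^*)=\chi(f)^*$, checked on elementary tensors and propagated by separate $w^*$-continuity of the associated bilinear forms) is essentially the argument the paper gives, so those two parts are in order.

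The divergence, and the genuine gap, is in the positivity step. The paper does not work with the cone generated by elements $w^*w$ in the Banach $^*$-algebra $A\widehat{\ot}B$; it works with the matrix-ordered structure of \cite{itoh}, in which the positive elements of $A^{**}\widehat{\ot}B^{**}$ are those of the form $\alpha(m\ot n)\alpha^*$ (the cone $C_1$), and the positive functionals on $A\widehat{\ot}B$ are exactly those $f$ whose associated map $\tilde f\colon A\to B^*$ is completely positive (\cite{itoh}, Theorem 1.9). With that duality the difficulty you single out as ``the main obstacle'' never arises: one only has to show that $\tilde f\in CP(A,B^*)$ forces $\widetilde{\chi(f)}\in CP(A^{**},B^{***})$, and for this it suffices to approximate a single positive matrix $[F_{ij}]\in M_n(A^{**})^+$ in the $w^*$-topology by positive matrices $[a_{ij}^{\lambda}]\in M_n(A)^+$; no products $F_i^*F_j$ occur, so separate $w^*$-continuity of $\widetilde{\chi(f)}$ together with the $w^*$-closedness of $M_n(B^{***})^+$ finishes the argument. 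Your route instead establishes $\chi(\Psi)(W^*W)\geq 0$ for functionals $\Psi$ with $\Psi(w^*w)\geq 0$. Even granting the Kaplansky/strong-$^*$ repair of the iterated limit (which is genuinely needed, since $w^*$-convergence of the $a_i^{\lambda}$ alone does not give $w^*$-convergence of the products $a_i^{\lambda *}a_j^{\lambda}$), this proves positivity with respect to a different pair of cones, and you have not shown that the $w^*w$-cone and its dual coincide with $C_1$ and $((A\widehat{\ot}B)^*)^+$, which is what the theorem asserts. The GNS alternative has a parallel gap: the GNS representation lives on $A\widehat{\ot}B$, and ``extending normally to the biduals'' really means first producing commuting nondegenerate representations of $A$ and of $B$ from it and extending each of those to $A^{**}$ and $B^{**}$, none of which is spelled out. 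The positivity step should be replaced by the Itoh duality argument above.
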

\begin{pf}
Given $\alpha\in M_{1,\infty^2}$, $\beta\in M_{\infty^2,1}$, $m\in M_{\infty}(A^{**})$, $n\in M_{\infty}(A^{**})$ and $f\in (A\widehat{\ot}B )^*$,
\\
 \hspace*{2.5 cm} $\mu(\alpha(m\ot n)\beta)^*(f)=\overline{\mu(\alpha(m\ot n)\beta)(f^*)}$
 \\
\hspace*{5.597 cm} $=\overline{\chi(f^*)(\alpha(m\ot n)\beta)}$\\
\hspace*{5.597 cm} $=\chi(f^*)^*(\beta^* (m^*\ot n^*)\alpha^*)$.\\
On the other hand, $\mu(\beta^* (m^*\ot n^*)\alpha^*)(f)=\chi(f)(\beta^* (m^*\ot n^*)\alpha^*)$. So in order to prove that $\mu$ is $^*$-preserving, we have  to show that $\chi(f)^*=\chi(f^*)$. Note that, for $a\in A$ and $b\in B$, $\chi(f)^*(\hat{a}\ot \hat{b})=\overline{\chi(f)(\hat{a}^*\ot \hat{b}^*)}=\overline{f(\hat{a}^*\ot \hat{b}^*)}=f^*(a\ot b)=\chi(f^*)(\hat{a}\ot \hat{b})$, and hence the result follows from the separate $w^*$-continuity of the bilinear forms corresponding to $\chi(f^*)$ and $\chi(f)^*$.

Now given an algebraic element $\alpha(m\ot n)\alpha^*\in C_1$, where $C_n$ is defined as in ~\cite{itoh}. For the positivity of $\mu$, we have to show that $\mu(\alpha(m\ot n)\alpha^*)(f)\geq 0$ for $f\in ((A\widehat{\ot} B)^*)^+$. By (~\cite{itoh}, Theorem 1.9), it suffices to show that if $\widetilde{f}\in CP(A,B^*)$ then $\widetilde{\chi(f)}\in CP(A^{**},B^{***})$, where $\widetilde{f}(a)(b)=f(a\ot b)$ for all $a\in A$, $b\in B$ and $\widetilde{\chi(f)}(m)(n)= \chi(f)(m\ot n)$ for all $m\in A^{**}$, $n\in B^{**}$. Since $M_n(A)^+$ is $w^*$-dense in  $M_n(A^{**})^+$, so given $[F_{ij}]\in M_n(A^{**})^+$ we obtain a net  $[a_{ij}^{\lambda}]\in M_n(A)^+$ which is $w^*$-convergent to $[F_{ij}]$. Now note that $\widetilde{\chi(f)}_{n} [F_{ij}]=w^*-\displaystyle\lim_{\lambda}\tilde{f}_{n}[a_{ij}^{\lambda}]$. Hence the result follows.

The bi-continuity of the map $\mu$ follows as in Theorem \ref{on5}.
\end{pf}


\begin{rem}
By   \emph{(~\cite{appr}, Theorem 2.2)},    the natural embedding $\mu$ is completely isometric if  one of the $C^*$-algebras  has the $W^*$MAP.
\end{rem}

We now discuss the isomorphism of this embedding.
 For $x\in A$, the map $f\ot g\to \widehat{x}(f)g=f(x)g$,  for $f\in A^*$ and
$g\in B^*$, has a unique continuous extension to a map $R_{\widehat{x}}: A^*\widehat{\ot} B^*\to B^*$, with $\|R_{\widehat{x}}\|\leq \|x\|$. 

The next proposition does not have counterpart in the Banach space context.
\begin{prop}\label{on7}
For $C^*$-algebras $A$ and $B$, the family $\{R_{\widehat{x}}: x\in A\}$ is total on $A^{*}\widehat{\otimes} B^{*}$.
\end{prop}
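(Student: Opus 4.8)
The plan is to read ``total'' in the usual sense: the family $\{R_{\widehat{x}}:x\in A\}$ is total on $A^{*}\widehat{\ot}B^{*}$ precisely when $\bigcap_{x\in A}\ker R_{\widehat{x}}=\{0\}$, i.e. $R_{\widehat{x}}(u)=0$ for all $x\in A$ forces $u=0$. So I fix $u\in A^{*}\widehat{\ot}B^{*}$ with $R_{\widehat{x}}(u)=0$ for every $x$, and choose a representation $u=\sum_{i}f_{i}\ot g_{i}$ with $f_{i}\in A^{*}$, $g_{i}\in B^{*}$ and $\sum_{i}\|f_{i}\|\,\|g_{i}\|<\infty$; every manipulation below is legitimate because this series converges absolutely. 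First I would unwind the hypothesis: since $R_{\widehat{x}}(u)=\sum_{i}f_{i}(x)\,g_{i}\in B^{*}$, pairing with $y\in B$ gives $\langle R_{\widehat{x}}(u),y\rangle=\sum_{i}f_{i}(x)g_{i}(y)=h_{y}(x)$, where $h_{y}:=\sum_{i}g_{i}(y)f_{i}\in A^{*}$. Thus the hypothesis says exactly that $h_{y}=0$ in $A^{*}$ for every $y\in B$, equivalently that $u$ annihilates every functional $\widehat{x}\ot\widehat{y}\in(A^{*}\widehat{\ot}B^{*})^{*}=CB(A^{*},B^{**})$.

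Next I would pass to the operator space injective tensor product. Let $\kappa:A^{*}\widehat{\ot}B^{*}\to A^{*}\check{\ot}B^{*}$ be the canonical complete contraction, and recall the completely isometric inclusion $A^{*}\check{\ot}B^{*}\hookrightarrow CB(A^{**},B^{*})$, under which $f\ot g$ becomes the map $F\mapsto F(f)\,g=R_{F}(f\ot g)$. On $u$ this produces the map $F\mapsto R_{F}(u)$, which is weak$^{*}$-continuous since $\langle R_{F}(u),y\rangle=F(h_{y})$ for each $y\in B$. Because $h_{y}=0$ for all $y$, this map vanishes on the weak$^{*}$-dense subset $\widehat{A}$ of $A^{**}$, hence (being normal) vanishes identically; as the inclusion is isometric, $\kappa(u)=0$. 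One can see the same thing more concretely by slicing a second time: for $\psi\in A^{**}$ one has $\psi(h_{y})=\sum_{i}\psi(f_{i})g_{i}(y)=k_{\psi}(y)$ with $k_{\psi}:=\sum_{i}\psi(f_{i})g_{i}\in B^{*}$, so $k_{\psi}=0$, and therefore $\langle u,\psi\ot G\rangle=G(k_{\psi})=0$ for every $G\in B^{**}$; that is, $u$ annihilates all of $A^{**}\ot B^{**}$.

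It then remains to deduce $u=0$ from $\kappa(u)=0$, and this is the step I expect to be the real obstacle: it is precisely the \emph{injectivity of} $\kappa$, equivalently the weak$^{*}$-density of $A^{**}\ot B^{**}$ in $CB(A^{*},B^{**})$. This is exactly where operator space structure is indispensable, and it explains the remark that the proposition ``does not have a counterpart in the Banach space context'': in the Banach setting the corresponding slice maps fail to be total whenever the approximation property is absent (Enflo-type phenomena), so one cannot expect $\kappa$ to be injective there. To carry out this step I would invoke the machinery already used for Proposition~\ref{on14}: $A^{*}$ is a locally reflexive operator space (\cite{effros}, Theorem 15.3.1), which yields the identification $(A^{*}\check{\ot}B^{*})^{*}\cong I(A^{*},B^{**})$ of the dual of the injective tensor product with the completely integral operators (\cite{effros}, Theorem 14.3.1). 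Local reflexivity of $A^{*}$ is what forces the completely nuclear and completely integral norms to agree on the relevant image, so that $\kappa$ is injective.

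Granting that injectivity, the chain is complete: the hypothesis gives $h_{y}=0$ for all $y$, hence $\kappa(u)=0$ by the normal-map argument, hence $u=0$ by injectivity of $\kappa$. The only genuinely delicate point, and the one I would spend the most care on, is justifying the injectivity of $\kappa$ from local reflexivity of $A^{*}$; everything preceding it is a routine unwinding of the definition of $R_{\widehat{x}}$ together with Goldstine-type weak$^{*}$-density of $\widehat{A}$ in $A^{**}$.
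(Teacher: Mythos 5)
Your reduction is correct as far as it goes: unwinding the hypothesis to $h_{y}=0$ for all $y\in B$, and then slicing again to conclude that $u$ annihilates all of $A^{**}\ot B^{**}$, is exactly right, and it shows that the hypothesis $R_{\widehat{x}}(u)=0$ for all $x\in A$ is \emph{equivalent} to $\kappa(u)=0$. But this means your argument has not reduced the problem to anything smaller: the proposition \emph{is} the statement that $\kappa:A^{*}\widehat{\ot}B^{*}\to A^{*}\check{\ot}B^{*}$ is injective, so everything before your third step is a reformulation, and the entire content is concentrated in the injectivity of $\kappa$. The justification you offer for that step --- that local reflexivity of $A^{*}$ ``forces the completely nuclear and completely integral norms to agree on the relevant image'' --- does not work. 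Local reflexivity of $A^{*}$ gives the identification $(A^{*}\check{\ot}B^{*})^{*}\cong I(A^{*},B^{**})$ used in Proposition~\ref{on14}, but it says nothing about $\ker\kappa$: the completely nuclear operators are \emph{defined} as the quotient of $A^{*}\widehat{\ot}B^{**}$-type spaces by that kernel, so an equality of norms on the image cannot detect whether the kernel vanishes. The Banach-space analogy you yourself invoke is the clearest warning: every Banach space satisfies the principle of local reflexivity, yet $X\ot_{\gamma}Y\to X\ot_{\lambda}Y$ can fail to be injective in the absence of the approximation property. Injectivity of the projective-to-injective map is governed by approximation properties, not by local reflexivity, so this step is a genuine gap.

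The paper closes it with a different, approximation-theoretic input. It takes an arbitrary $T\in(A^{*}\widehat{\ot}B^{*})^{*}=CB(B^{*},A^{**})$, puts $A^{**}$ in the universal representation of $A$, and invokes the weak-$*$ approximation property (the $W^{*}AP$, via \cite{effros} and \cite{blecher}) to approximate $\widetilde{T}$ pointwise by finite-rank \emph{weak-$*$ continuous} maps $\widetilde{T}_{\alpha}$ with $\|\widetilde{T}_{\alpha}\|_{cb}\leq\|\widetilde{T}\|_{cb}$. Each such map has the form $g\mapsto\sum_{j}\Phi_{j}(g)\Psi_{j}$ with $\Phi_{j}\in B^{**}$ and $\Psi_{j}\in A^{**}$, so $T_{\alpha}(u)=\sum_{j}\Psi_{j}(L_{\Phi_{j}}(u))$, and your step showing $L_{G}(u)=0$ for every $G\in B^{**}$ kills each $T_{\alpha}(u)$; passing to the limit gives $T(u)=0$ for every $T$, hence $u=0$. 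To salvage your outline, replace the appeal to local reflexivity by this approximation theorem for completely bounded maps into von Neumann algebras --- that is the one non-routine ingredient, and without it the proof is incomplete.
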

\begin{pf}
Suppose that $u\in A^{*}\widehat{\otimes} B^{*}$ such that $R_{\widehat{x}}(u)=0$ for all $x\in A$. Let $T\in (A^{*}\widehat{\otimes} B^{*})^*$
 with $\|T\|\leq 1$. Since $(A^{*}\widehat{\otimes} B^{*})^*= CB(B^{*}, A^{**})$, so $T(f\ot g)= \widetilde{T}(g)(f)$  for some $\widetilde{T}\in  CB(B^{*}, A^{**})$, for all $f\in A^*$ and $g\in B^*$, with $\|\tilde{T}\|_{cb}=\|T\|_{cb}\leq 1$. If $A^{**}$ is taken in the universal representation of $A$ then
  $\widetilde{T}$ satisfies the $W^*AP$ by (~\cite{effros}, Theorem 15.1) and (~\cite{blecher}, \S 1.4.10). So there exists a net  $\widetilde{T}_{\alpha}$ of finite rank $w^*$-continuous
   mapping from  $B^{*}$ to  $A^{**}$ such that $\|\widetilde{T}_{\alpha}\|_{cb}\leq \|\widetilde{T}\|_{cb}$, and $\widetilde{T}_{\alpha}(g)\to \widetilde{T}(g)$ for all $g\in B^*$. Thus for $u\in A^{*}\widehat{\otimes} B^{*}$ and $\epsilon>0$, there exists $\alpha_0$ such that $|T(u)-T_{\alpha}(u)|<\epsilon$ for all $\alpha \geq \alpha_0$.  Since $\widetilde{T}_{\alpha}\in CB(B^{*}, A^{**})$,
  we have $T_{\alpha}\in (A^{*}\widehat{\otimes} B^{*})^*$ such that $T_{\alpha}(f\ot g)= \widetilde{T}_{\alpha}(g)(f)$. Since $\widetilde{T}_{\alpha}$ is
  a finite rank operator so, as in Lemma \ref{on8}, $\widetilde{T}_{\alpha}(g)= \ds\sum_{j=1}^{l}\Phi_{j}(g)\Psi_j$ for $\Phi_j\in B^{**}$ and $ \Psi_j\in A^{**}$. Thus, for $u=\ds\sum_{k=1}^{\infty} \alpha_k (f_k\ot g_k)\beta_k$, where $\alpha_k\in M_{1, p_k\times
 q_k}$, $\beta_k\in M_{ p_k\times
 q_k, 1}$, $f_k\in M_{p_k}(A^{*})$, and $g_k\in M_{q_k}(B^{*})$, a norm convergent representation in $A^{*}\widehat{\otimes} B^{*}$~\cite{effros},  $T_{\alpha}(u)=\ds\sum_{k=1}^{\infty}
\ds\sum_{m,n,p,q}\alpha^{k}_{1,mn}\widetilde{T}_{\alpha} (g^k_{nq})(f^k_{mp})  \beta^{k}_{pq,1}=\ds\sum_{j=1}^{l}
\Psi_{j}(\ds\sum_{k=1}^{\infty} \ds\sum_{m,n,p,q}\alpha^{k}_{1,mn}\Phi_j(g^k_{nq})f^k_{mp}  \beta^{k}_{pq,1})$. Given
$\ds\sum_{k=1}^{\infty}
\ds\sum_{m,n,p,q}\alpha^{k}_{1,mn}\widehat{x}(f^k_{mp}) g^k_{nq}  \beta^{k}_{pq,1}=0$ for all $x\in A$. Therefore,
$\ds\sum_{k=1}^{\infty}\ds\sum_{m,n,p,q}\alpha^{k}_{1,mn}f^k_{mp}G( g^k_{nq})  \beta^{k}_{pq,1}=0$ for any $G\in B^{**}$. Thus $T_{\alpha}(u)=0$, giving that $|T(u)|\leq |T(u)-T_{\alpha}(u)|+ |T_{\alpha}(u)|< \epsilon$ for all $\alpha \geq \alpha_0$,  and hence $u=0$.
\end{pf}

In particular, the map $J:A^*\widehat{\ot} B^*\to CN(A,B^*) $ defined above is 1-1. Thus $A^*\widehat{\ot} B^*\cong CN(A,B^*)$.\\

Now, as in Remark \ref{on11}(iii), we have the following:
\begin{cor}
Let  $A$ and $B$ be $C^*$-algebras such  that every completely bounded operator  from  $A$ to $ B^*$ is  completely nuclear and the map $\phi$ defined in the \emph{Proposition \ref{on14}} is onto. Then the natural embedding $\mu: A^{**} \widehat{\ot} B^{**}\to (A \widehat{\ot} B)^{**}$ is an isomorphism map.
\end{cor}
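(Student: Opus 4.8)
The plan is to transcribe the argument of Remark~\ref{on11}(iii) into the operator-space framework, the Banach-space ingredients being replaced by their operator-space analogues. The structural backbone is the identity $j^{*}\circ\mu=\phi$, where $j:A^{*}\check{\ot}B^{*}\to (A\widehat{\ot}B)^{*}=CB(A,B^{*})$ is the natural completely isometric embedding coming from the duality between $\widehat{\ot}$ and $\check{\ot}$, and $\phi$ is the map of Proposition~\ref{on14}; this is precisely the relation verified (as in Theorem~\ref{on5}) during the proof of the bi-continuity of $\mu$ in the theorem above. Granting it, the proof splits into two surjectivity assertions: that $j$ is onto and that $\phi$ is onto.

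First I would establish that $j$ is surjective. Let $J_{0}:A^{*}\widehat{\ot}B^{*}\to CN(A,B^{*})$ be the completely isometric isomorphism provided by Proposition~\ref{on7} and the remark following it, and let $i:A^{*}\widehat{\ot}B^{*}\to A^{*}\check{\ot}B^{*}$ be the canonical map. On an elementary tensor $f\ot g$ both $j\circ i$ and $J_{0}$ produce the completely bounded operator $a\mapsto f(a)g$ from $A$ into $B^{*}$, so by linearity and continuity $j\circ i=J_{0}$. Now take $T\in CB(A,B^{*})=(A\widehat{\ot}B)^{*}$; by the first hypothesis $T$ is completely nuclear, hence $T=J_{0}(u)=j(i(u))$ for some $u\in A^{*}\widehat{\ot}B^{*}$, so $T$ lies in $j(A^{*}\check{\ot}B^{*})$. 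Thus $j$ is onto, and being completely isometric it is a complete isomorphism; in particular $j^{*}$ is injective.

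To conclude, fix $\xi\in (A\widehat{\ot}B)^{**}$. Since $\phi$ is onto by the second hypothesis, there is $u\in A^{**}\widehat{\ot}B^{**}$ with $\phi(u)=j^{*}(\xi)$; then $j^{*}(\mu(u))=\phi(u)=j^{*}(\xi)$, and the injectivity of $j^{*}$ forces $\mu(u)=\xi$. Hence $\mu$ is surjective. Together with the lower bound $\frac{1}{2}\|u\|_{\wedge}\le\|\mu(u)\|$ already proved—which makes $\mu$ injective with closed range—this exhibits $\mu$ as a bounded linear bijection with bounded inverse, i.e.\ an isomorphism of $A^{**}\widehat{\ot}B^{**}$ onto $(A\widehat{\ot}B)^{**}$.

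The step I expect to be the main obstacle is the rigorous justification of the identity $j^{*}\circ\mu=\phi$ and of the identifications on which it rests: the description $(A\widehat{\ot}B)^{*}=CB(A,B^{*})$, the completely isometric embedding $A^{*}\check{\ot}B^{*}\hookrightarrow CB(A,B^{*})$, and the local-reflexivity identification $(A^{*}\check{\ot}B^{*})^{*}=I(A^{*},B^{**})$ underlying Proposition~\ref{on14}. These are exactly the places where the operator-space apparatus (local reflexivity of $A^{*}$ and the $W^{*}$AP invoked in Proposition~\ref{on7}) genuinely intervenes; once they are secured, the remaining bookkeeping is routine and formally identical to the Banach-space argument of Remark~\ref{on11}(iii).
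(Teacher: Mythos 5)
Your proposal is correct and follows essentially the route the paper intends: the paper simply says ``as in Remark \ref{on11}(iii)'', i.e.\ use $j^{*}\circ\mu=\phi$, deduce surjectivity of $j$ from the complete-nuclearity hypothesis together with $A^{*}\widehat{\ot}B^{*}\cong CN(A,B^{*})$ (the consequence of Proposition \ref{on7}) and the identity $j\circ i=J$, and then combine surjectivity of $\phi$ with the two-sided bound on $\mu$. Your write-up supplies exactly these steps, so it matches the paper's argument.
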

\begin{rem}
The embedding  in the case of the Haagerup tensor product turns out to be completely isometric, which can be seen as below. For operator spaces $X$, $Y$,  using the fact that $T_n^{*}=M_n$ and \emph{(~\cite{blecher}, \S 1.6.7)}, the map $\chi:(X\ot_{h} Y)^*\to (X^{**}\ot_{h} Y^{**})^*$ is completely isometric. Set $\tau:= \chi^*\circ i $, where $i: X^{**}\ot_{h} Y^{**} \to (X^{**}\ot_{h} Y^{**})^{**}$. Then, clearly $\|\tau\|_{cb}\leq 1$.
By the self-duality of the Haagerup norm, the map $\phi: X^{**}\ot_{h} Y^{**} \to (X^*\ot_{h} Y^*)^{*}$ is completely isometric. As in \emph{Theorem \ref{on5}}, $j^*\circ \tau=\phi$, where $j$ is the completely isometric map from $X^{*}\ot_{h} Y^{*}$ to $(X\ot_{h} Y)^{*}$, which further gives  $j^*_n\circ \tau_n=\phi_n$ for any $n\in \mathbb{N}$. Thus $\tau$ is completely isometric.
\end{rem}

\section{Closed Ideals in  $A\widehat{\ot} B$}
It was shown in (~\cite{r4}, Corollary 2.21) that if both $A$ and $B$ have finitely many closed ideals then
every closed ideal of $A\widehat{\ot} B$ is a finite sum of product ideals. In the following, we show that the result is true even if  one of the algebras has only a finite number of closed ideals. Thus obtaining the complete lattice of closed ideals of $B(H)\widehat{\ot}B(H)$, $B(H)\widehat{\ot} C_0(X)$, $(M(A)/A) \widehat{\ot} B$, where $H$ is an infinite dimensional separable Hilbert space, $X$ is a locally compact Hausdorff space, $B$ is any $C^*$-algebra and $M(A)$ is the multiplier algebra of $A$, $A$ being a nonunital, non-elementary, separable, simple \emph{AF} $C^*$-algebra (~\cite{lin}, Theorem 2).


\begin{prop}\label{p1}
Let $A$ and $B$ be $C^*$-algebras and  $I$ a closed ideal in $A\widehat{\ot} B$. If $a\ot b\in I_{h}$, the closure of $i(I)$ in $\|\cdot\|_h$, then $a\ot b\in I$, where $i$  is  the natural map from $A\widehat{\ot} B$ into $A\otimes_{h} B$.
\end{prop}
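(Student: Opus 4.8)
The plan is to show that $a\ot b$ lies in the $\|\cdot\|_{\wedge}$-closure of $I$; since $I$ is already $\|\cdot\|_{\wedge}$-closed, this gives $a\ot b\in I$. The only datum at hand is that $a\ot b\in I_h$, i.e. there is a net $u_{\nu}\in I$ with $\|i(u_{\nu})-a\ot b\|_h\to 0$. The genuine difficulty is that $i\colon A\widehat{\ot}B\to A\ot_h B$ is merely norm-decreasing, and in infinite dimensions neither injective nor open, so Haagerup-level control of $i(u_{\nu})$ gives no direct control of the \emph{projective} norm of any element of $I$. The whole content of the statement is to upgrade a $\|\cdot\|_h$-approximation into a $\|\cdot\|_{\wedge}$-approximation, and one must be careful not to attempt this by a naive module computation: if one could bound $\|(e\ot f)\,w\,(e\ot f)\|_{\wedge}$ by a constant times $\|w\|_h$, then taking $A,B$ unital and $e=f=1$ would force $\|\cdot\|_{\wedge}\le\|\cdot\|_h$, which already fails for matrix algebras. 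So the two-sided module action from $\|\cdot\|_h$ into $\|\cdot\|_{\wedge}$ is \emph{not} continuous, and the argument must instead exploit the ideal structure and the $C^*$-structure of $A$ and $B$.

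The device I would use to leave the tensor product entirely is the family of slice maps. For $g\in B^{*}$ the right slice $R_g$ is bounded both on $A\ot_h B$ and on $A\widehat{\ot}B$, and since $R_g\circ i$ and $R_g^{\wedge}$ agree on $A\ot B$ and are continuous, one has $R_g\circ i=R_g^{\wedge}$ as maps into the single $C^*$-algebra $A$; similarly for left slices $L_f$, $f\in A^{*}$. Applying $R_g$ to $i(u_{\nu})\to a\ot b$ and using $R_g(i(u_{\nu}))=R_g^{\wedge}(u_{\nu})\in R_g^{\wedge}(I)$ yields $g(b)\,a\in\overline{R_g^{\wedge}(I)}$ in the norm of $A$, and symmetrically $f(a)\,b\in\overline{L_f^{\wedge}(I)}$ in $B$. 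The point of passing through the slices is exactly that their common target is the $C^*$-algebra $A$ (respectively $B$), where there is a single norm and the Haagerup/projective ambiguity disappears; this is what converts the intractable $\|\cdot\|_h$-data into honest $C^*$-level membership statements.

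Before reassembling, I would use continuous functional calculus to reduce to elements with local units: writing $a_{\epsilon}=ah_{\epsilon}$ and $b_{\epsilon}=bl_{\epsilon}$ with $h_{\epsilon}\in A$, $l_{\epsilon}\in B$ (cut-off functions of $|a|$, $|b|$), one has the honest identity $a_{\epsilon}\ot b_{\epsilon}=(a\ot b)(h_{\epsilon}\ot l_{\epsilon})$, so that $a_{\epsilon}\ot b_{\epsilon}\in I_h$ because $I_h$ is an ideal of $A\ot_h B$; moreover $a_{\epsilon}\ot b_{\epsilon}\to a\ot b$ in $\|\cdot\|_{\wedge}$ since $\|x\ot y\|_{\wedge}=\|x\|\,\|y\|$. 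As $I$ is $\|\cdot\|_{\wedge}$-closed it therefore suffices to treat $a_{\epsilon}\ot b_{\epsilon}$, so I may assume there are positive contractions $e\in A$, $f\in B$ with $ea=ae=a$ and $fb=bf=b$.

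The main obstacle is the final reassembly: from the separate slice memberships $g(b)a\in\overline{R_g^{\wedge}(I)}$ ($g\in B^{*}$) and $f(a)b\in\overline{L_f^{\wedge}(I)}$ ($f\in A^{*}$) one must reconstruct a single element of $I$ that is $\|\cdot\|_{\wedge}$-close to $a\ot b$, thereby placing $a\ot b$ in the $\|\cdot\|_{\wedge}$-closed ideal $I$. Here I expect to use the local units $e,f$, the fact that $I$ is an ideal, and bounded approximate identities of $A$ and $B$ to combine the one-variable data back into the tensor $a\ot b$; this is precisely the step where the finer structure of closed ideals of the Haagerup tensor product of $C^*$-algebras (and the slice/Fubini behaviour that holds there but fails for $\ot_{\min}$) must be invoked, and it is the part I anticipate being the most delicate. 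Once $a\ot b$ is realized as a $\|\cdot\|_{\wedge}$-limit of members of $I$, closedness of $I$ finishes the proof.
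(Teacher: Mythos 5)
Your write-up is a plan rather than a proof, and the step you defer --- reassembling the slice data into membership of $a\ot b$ in $I$ --- is the entire content of the proposition, so as it stands there is a genuine gap. What you actually establish is that $g(b)\,a\in\overline{R_g(I)}$ for every $g\in B^*$ and $f(a)\,b\in\overline{L_f(I)}$ for every $f\in A^*$ (plus the harmless functional-calculus reduction to local units). Passing from such separate slice conditions back to membership in $I$ is a Fubini/slice-map property of the closed ideal $I$; properties of this type are precisely the delicate point in tensor-product ideal theory and are known to fail for general closed subspaces, and nothing in your argument supplies the required mechanism. Worse, this proposition is a lemma used downstream to \emph{derive} the lattice of closed ideals of $A\widehat{\ot}B$, so an appeal to ``the finer structure of closed ideals of the Haagerup tensor product'' at this stage risks circularity. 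Your preliminary observations (that the two-sided module action from $\|\cdot\|_h$ into $\|\cdot\|_{\wedge}$ cannot be bounded, and that one may assume $a$ and $b$ have local units) are correct but do not advance the main point.

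The paper's own proof goes in a different and much shorter direction: since the identity map $A\ot_h B\to A\ot_{\min}B$ is contractive, the convergence $i(i_n)\to a\ot b$ in $\|\cdot\|_h$ already gives convergence in $\|\cdot\|_{\min}$, hence $a\ot b\in I_{\min}$, the $\min$-closure of the image of $I$; the statement that an elementary tensor lying in $I_{\min}$ must lie in $I$ is then quoted from Kumar (\emph{Math. Zeit.} 237 (2001), Theorem 6). In other words, the hard analytic content is delegated to a previously published min-norm version of the result, and the proposition itself is only the observation that the Haagerup norm dominates the minimal $C^*$-norm. If you want a self-contained argument you would in effect be reproving that theorem; your slice-map framework would then have to be completed by the specific device used there rather than by the ideal-structure results that depend on this very proposition.
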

\begin{pf}
Since $a\ot b\in I_{h}$ so there exists  a sequence $i_n\in I$ such that $\|a\ot b - i(i_n)\|_{h}\to 0$ as $n$ tends to infinity. Consider the identity map $\epsilon:A\ot_{h} B \to A\ot_{\min} B$ and $i':  A\widehat{\ot} B \to A\ot_{\min} B$. Of course, $i'=\epsilon \circ i$ on $A\ot B$, and hence by continuity $i'=\epsilon \circ i$. Thus $a\ot b\in I_{min}$ and so $a\ot b\in I$ by (~\cite{kumar}, Theorem 6).
\end{pf}

The following lemma can be proved as a routine modification to the arguments of (~\cite{r4}, Lemma 2.8).
\begin{lem}\label{on13}
For closed ideals $M$ of $A$ and  $N$ of $B$, $A\widehat{\ot}N+M\widehat{\ot} B   =  (A\ot_{h}N+M\ot_{h }B)\cap (A\widehat{\ot}B)$.
\end{lem}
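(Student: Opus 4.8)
The plan is to prove the set equality by establishing both inclusions, with the forward inclusion being essentially formal and the reverse inclusion carrying the real content.

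First I would verify the inclusion $A\widehat{\ot}N+M\widehat{\ot}B \subseteq (A\ot_h N+M\ot_h B)\cap(A\widehat{\ot}B)$. Since the operator space projective norm dominates the Haagerup norm, the natural map $i:A\widehat{\ot}B\to A\ot_h B$ is contractive, so the image of $A\widehat{\ot}N$ lands in the $\|\cdot\|_h$-closure of $A\ot N$ inside $A\ot_h B$, which is $A\ot_h N$; similarly for $M\widehat{\ot}B$. Thus every element of the left-hand side lies in $A\ot_h N+M\ot_h B$ and trivially in $A\widehat{\ot}B$, giving the easy containment. Here one must be slightly careful that $A\widehat{\ot}N$ embeds as a genuine subspace of $A\widehat{\ot}B$ (so that the intersection on the right makes sense), which follows because $N$ sits completely isometrically in $B$ and the operator space projective tensor product respects complete isometries.

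For the reverse inclusion, which is the heart of the matter, I would follow the template of (~\cite{r4}, Lemma 2.8). Take $u\in A\widehat{\ot}B$ whose image under $i$ lies in $A\ot_h N+M\ot_h B$. The goal is to show $u\in A\widehat{\ot}N+M\widehat{\ot}B$. The natural approach is to pass to the quotient: consider the quotient map $q:B\to B/N$ and the induced complete quotient map $\mathrm{id}_A\widehat{\ot}\,q:A\widehat{\ot}B\to A\widehat{\ot}(B/N)$, whose kernel is precisely $A\widehat{\ot}N$. One then shows that the image of $u$ in $A\widehat{\ot}(B/N)$ actually comes from $M\widehat{\ot}(B/N)$, i.e. lies in the image of $M$, using the hypothesis that $i(u)\in A\ot_h N+M\ot_h B$ reduces modulo $N$ to an element of $M\ot_h(B/N)$. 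The corresponding Haagerup-level statement — that an element of $A\ot_h(B/N)$ lying in the appropriate Haagerup subspace is forced into $M\ot_h(B/N)$ — is exactly what the cited lemma supplies for the Haagerup norm, and the slicing/quotient compatibility lets one lift this back.

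The main obstacle I expect is the compatibility of quotients with the two tensor norms simultaneously: one needs that $A\widehat{\ot}N$ is exactly the kernel of $\mathrm{id}_A\widehat{\ot}\,q$ and that the induced map on quotients is again a complete quotient map (which holds because $\widehat{\ot}$ is projective and hence preserves complete quotient maps), while at the same time tracking the image in the Haagerup completion where the analogous kernel is $A\ot_h N$. Reconciling these two exact sequences — and in particular ensuring that an element landing in $M\ot_h(B/N)$ at the Haagerup level genuinely lifts to something in $M\widehat{\ot}(B/N)$ without enlarging the subspace — is the delicate point. Once the diagram chase is set up correctly, however, the argument is the routine modification of (~\cite{r4}, Lemma 2.8) that the statement promises, so I would not grind through the matrix-level norm estimates but instead cite that lemma for the Haagerup computation and supply only the projective-tensor quotient bookkeeping.
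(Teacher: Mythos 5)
Your easy inclusion is fine in substance, but the justification that $A\widehat{\ot}N$ sits inside $A\widehat{\ot}B$ ``because the operator space projective tensor product respects complete isometries'' is wrong as a general principle: $\widehat{\ot}$ is projective, not injective, and does not preserve complete isometries. For a closed ideal $N$ of a $C^*$-algebra the inclusion $A\widehat{\ot}N\to A\widehat{\ot}B$ is nevertheless an isometry onto a closed ideal (\cite{kumar}, Theorem 5), which is what you actually need; the correct reason is the existence of a bounded approximate identity, not functoriality for isometries.

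The reverse inclusion as you describe it has a genuine gap and is circular at the decisive step. After passing to $A\widehat{\ot}(B/N)$ you must show that $v:=(\mathrm{id}\widehat{\ot}q_N)(u)$, whose image in $A\ot_h(B/N)$ lies in $M\ot_h(B/N)$, actually lies in $M\widehat{\ot}(B/N)$. That is itself an instance of the lemma being proved (the ideal pair $(M,\{0\})$ in $(A,B/N)$), and it is not ``exactly what the cited lemma supplies for the Haagerup norm'': the Haagerup-level ideal theory of \cite{sinc} lives entirely inside $\ot_h$ and says nothing about how $\widehat{\ot}$ and $\ot_h$ interact, which is the whole content here. The missing ingredient, which your sketch never names, is the injectivity of the canonical map $(A/M)\widehat{\ot}(B/N)\to(A/M)\ot_h(B/N)$ (\cite{r1}, Theorem 1, applied to the quotient $C^*$-algebras). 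With it the argument closes in one step and needs no intermediate quotient by $N$ alone: if $u\in A\widehat{\ot}B$ and $i(u)\in A\ot_h N+M\ot_h B=\ker(\pi_M\ot_h q_N)$, then $(\pi_M\widehat{\ot}q_N)(u)$ maps to $0$ in $(A/M)\ot_h(B/N)$ by the commutativity of the obvious square, hence vanishes by injectivity, hence $u\in\ker(\pi_M\widehat{\ot}q_N)=A\widehat{\ot}N+M\widehat{\ot}B$ by the kernel description in \cite{r3}. Without invoking this injectivity (or an equivalent slice-map argument), the ``lifting back'' you appeal to is precisely the statement to be proved, so the proposal as written does not constitute a proof.
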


In order to prove our main result. We first investigate the inverse image of closed ideals of $A_2 \widehat{\ot}B_2$ for $C^*$-algebras $A_2$ and $B_2$, which is largely based on the ideas  of (~\cite{effros}, Proposition 7.1.7)
\begin{prop}\label{p2}
For $C^*$-algebras $A_1,A_2, B_1$, and $B_2$ and the complete quotient maps $\phi:A_1\to A_2$, $\psi:B_1\to B_2$.  Let $I_2$
and $J_2$ be closed ideals in $A_2$ and $B_2$, respectively. Then
\[(\phi\widehat{\ot}\psi)^{-1}(I_2\widehat{\ot}J_2)=\phi^{-1}(0)\widehat{\ot}A_2+ \phi^{-1}(I_2)\widehat{\ot}\psi^{-1}(J_2)+A_1\widehat{\ot}\psi^{-1}(0).\]
\end{prop}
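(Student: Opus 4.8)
The statement is an equality of closed ideals in $A_1\widehat{\ot}B_1$, so the natural strategy is to prove the two inclusions separately, treating the right-hand side as an abbreviation
\[
K:=\phi^{-1}(0)\widehat{\ot}A_2+\phi^{-1}(I_2)\widehat{\ot}\psi^{-1}(J_2)+A_1\widehat{\ot}\psi^{-1}(0).
\]
First I would verify that every summand of $K$ lands inside the preimage on the left: this is routine, since $(\phi\widehat{\ot}\psi)$ kills $\phi^{-1}(0)\widehat{\ot}A_2$ (already $\phi$ annihilates the first leg) and $A_1\widehat{\ot}\psi^{-1}(0)$ (symmetrically), while $\phi^{-1}(I_2)\widehat{\ot}\psi^{-1}(J_2)$ is mapped into $I_2\widehat{\ot}J_2$ by functoriality of $\widehat{\ot}$ on completely bounded maps. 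Taking closures and using that $(\phi\widehat{\ot}\psi)^{-1}(I_2\widehat{\ot}J_2)$ is closed gives $K\subseteq(\phi\widehat{\ot}\psi)^{-1}(I_2\widehat{\ot}J_2)$ once one checks $K$ is itself closed.

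The substantial direction is the reverse inclusion, and here I would follow the template of (\cite{effros}, Proposition 7.1.7) adapted from the algebraic/minimal setting to $\widehat{\ot}$. The key input is that $\phi$ and $\psi$ are \emph{complete} quotient maps, so by the quotient property of the operator space projective tensor product (functoriality: $\phi\widehat{\ot}\psi$ is again a complete quotient map) one has a completely isometric identification
\[
(A_1\widehat{\ot}B_1)\big/\ker(\phi\widehat{\ot}\psi)\;\cong\;A_2\widehat{\ot}B_2 .
\]
I would then identify $\ker(\phi\widehat{\ot}\psi)$: since $\ker\phi=\phi^{-1}(0)$ and $\ker\psi=\psi^{-1}(0)$, the kernel of the tensored map is $\phi^{-1}(0)\widehat{\ot}B_1+A_1\widehat{\ot}\psi^{-1}(0)$, which equals $\phi^{-1}(0)\widehat{\ot}A_2+A_1\widehat{\ot}\psi^{-1}(0)$ after absorbing the second leg through $\psi$ (i.e.\ $\phi^{-1}(0)\widehat{\ot}B_1$ and $\phi^{-1}(0)\widehat{\ot}A_2$ generate the same closed ideal modulo the other summand). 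Now the right-hand side $K$ is precisely the preimage under the quotient map $q:A_1\widehat{\ot}B_1\to A_2\widehat{\ot}B_2$ of the ideal $I_2\widehat{\ot}J_2$ inside $A_2\widehat{\ot}B_2$: indeed $K\supseteq\ker q$ and $q(\phi^{-1}(I_2)\widehat{\ot}\psi^{-1}(J_2))$ is dense in $I_2\widehat{\ot}J_2$, because $\phi(\phi^{-1}(I_2))=I_2$ and $\psi(\psi^{-1}(J_2))=J_2$ by surjectivity of the quotient maps. Since the preimage of a closed ideal under a continuous surjection is exactly $q^{-1}(I_2\widehat{\ot}J_2)=(\phi\widehat{\ot}\psi)^{-1}(I_2\widehat{\ot}J_2)$, this yields the reverse inclusion and hence equality.

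The main obstacle I anticipate is the passage from the \emph{algebraic} preimage to the \emph{closed} one, i.e.\ showing that $K$ is genuinely closed and that $q(K)$ fills out all of $I_2\widehat{\ot}J_2$ rather than just a dense subspace. The density of $q\bigl(\phi^{-1}(I_2)\widehat{\ot}\psi^{-1}(J_2)\bigr)$ in $I_2\widehat{\ot}J_2$ is immediate on elementary tensors, but upgrading this to surjectivity onto the closed ideal requires the open mapping / complete quotient structure: because $q$ restricted to the preimage is, up to the kernel, a complete quotient map onto $A_2\widehat{\ot}B_2$, it maps the closed ideal $\overline{\phi^{-1}(I_2)\widehat{\ot}\psi^{-1}(J_2)+\ker q}$ \emph{onto} the closed ideal $I_2\widehat{\ot}J_2$. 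I would make this precise by invoking the lifting afforded by the complete quotient norm (every element of $A_2\widehat{\ot}B_2$ of norm $<1$ lifts to an element of $A_1\widehat{\ot}B_1$ of norm $<1$), which lets one build a norm-convergent series in $K$ mapping to any prescribed element of $I_2\widehat{\ot}J_2$. This standard completeness argument, together with Lemma \ref{on13} to handle the interaction of the closed ideals generated by the several legs, closes the gap.
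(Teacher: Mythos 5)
Your strategy is sound and genuinely different from the paper's. The paper works on the dual side: after invoking (\cite{r2}, Proposition 3.2) so that the right-hand side $K$ is a closed ideal, it applies the bipolar theorem, shows that every $F\in K^{\perp}$, viewed as a jointly completely bounded bilinear form $F_1$ on $A_1\times B_1$, descends to a well-defined form $F_2$ on $A_2\times B_2$ (well-definedness uses that $F$ kills the two kernel summands), verifies $\|(F_2)_p\|\le\|(F_1)_p\|$ from the complete quotient property so that $F=\tilde{F_2}\circ(\phi\widehat{\ot}\psi)$, and then annihilates $F$ on the left-hand side by lifting a norm-convergent elementary-tensor representation of $\phi\widehat{\ot}\psi(z)$ with entries in the open unit balls of $I_2$ and $J_2$. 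Your argument is the primal counterpart: identify $\ker(\phi\widehat{\ot}\psi)$, check it lies in $K$, and lift $q(z)$ back into $\phi^{-1}(I_2)\widehat{\ot}\psi^{-1}(J_2)$. Both routes rest on the same two external inputs (closedness of a finite sum of product ideals, and norm-controlled lifting through complete quotients), and yours is arguably the more transparent of the two; the trade-off is that it front-loads the entire difficulty into the kernel identification.

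Two points need repair before the argument closes. First, $\ker(\phi\widehat{\ot}\psi)=\overline{\ker\phi\otimes B_1+A_1\otimes\ker\psi}$ is not a formal consequence of ``$\ker\phi=\phi^{-1}(0)$''; it is precisely the $I_2=J_2=\{0\}$ instance of the proposition you are proving, and must be imported explicitly from (\cite{effros}, Proposition 7.1.7) rather than asserted. Relatedly, your parenthetical that $\phi^{-1}(0)\widehat{\ot}B_1$ and $\phi^{-1}(0)\widehat{\ot}A_2$ ``generate the same closed ideal modulo the other summand'' is not meaningful: $\phi^{-1}(0)\widehat{\ot}A_2$ is not a subspace of $A_1\widehat{\ot}B_1$ at all; the $A_2$ in the statement is a misprint for $B_1$ and should simply be corrected, not rationalized. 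Second, the closedness of $K$ is genuinely needed (you use it to absorb $z-k\in\ker q$, which lies only in the \emph{closure} of the sum of the two kernel summands), and Lemma \ref{on13} does not supply it; the correct reference is (\cite{r2}, Proposition 3.2), the same one the paper uses. With those two citations in place your proof is complete.
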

\begin{pf}
By (~\cite{r2}, Proposition 3.2) and the Bipolar theorem, it suffices to show that
\[(\phi\widehat{\ot}\psi)^{-1}(I_2\widehat{\ot}J_2)^{\perp}=[\phi^{-1}(0)\widehat{\ot}A_2+ \phi^{-1}(I_2)\widehat{\ot}\psi^{-1}(J_2)+A_1\widehat{\ot}\psi^{-1}(0)]^{\perp}.\]
Let $F\in [\phi^{-1}(0)\widehat{\ot}A_2+ \phi^{-1}(I_2)\widehat{\ot}\psi^{-1}(J_2)+A_1\widehat{\ot}\psi^{-1}(0)]^{\perp}$ then $F\in (A_1 \widehat{\ot} B_1)^*$ and $F(\phi^{-1}(0)\widehat{\ot}A_2+ \phi^{-1}(I_2)\widehat{\ot}\psi^{-1}(J_2)+A_1\widehat{\ot}\psi^{-1}(0))=0$. Since $JCB(A_1\times B_1,\mathbb{C})= (A_1\widehat{\ot} B_1)^*$, so $F(v\ot w)=F_1(v,w)$ for some $F_1\in JCB(A_1\times B_1,\mathbb{C})$,  for all $v\in A_1$ and $w\in B_1$. Define  a bilinear map $F_2: A_2\times B_2\to \C$ as $F_2(v_1, w_1)=F_1(v,w)$, where $\phi(v)=v_1$ and $\psi(w)=w_1$. Clearly, $F_2$ is well defined. Note that, for $p\in \mathbb{N}$, $[v_{ij}]\in M_p(A_1)$ and $[w_{kl}]\in M_p(B_1)$, we have $(F_2)_p(\phi_p[v_{ij}],\psi_p[w_{kl}]) =  (F_1)_p([v_{ij}],[w_{kl}] )$. For any $\epsilon > 0$, there are $[v_{ij}^1]\in M_p(A_2)$ and $[w_{kl}^1]\in M_p(B_2)$ with $\|[v_{ij}^1]\|\leq 1$, $\|[w_{kl}^1]\|\leq 1$ such that $\|(F_2)_p\|-\epsilon< \|(F_2)_p([v_{ij}^1],[w_{kl}^1])\| $. We can find $r,s\in \mathbb{R}$ such that $\|[v_{ij}^1]\|< r\leq 1$, $\|[w_{kl}^1]\|<s\leq 1$.  By definition, we may write $\phi_p[v_{ij}]=\frac{[v_{ij}^1]}{r}$ and $\psi_p[w_{kl}]=\frac{[w_{kl}^1]}{s}$, where  $[v_{ij}]\in M_p(A_1)$,  $[w_{kl}]\in M_p(B_1)$ both have norm $<$ 1. Thus $\|(F_1)_p\|>\|(F_1)_p([v_{ij}], [w_{kl}])\|= \|(F_2)_p(\phi_p[v_{ij}],\psi_p[w_{kl}] )\|=\| (F_2)_p(\frac{[v_{ij}^1]}{r}, \frac{[w_{kl}^1]}{s})\|$, and so $\|(F_1)_p\|\geq \|(F_2)_p\|$.  This shows that $F_2: A_2\times B_2\to \C$ is jcb bilinear form. Thus it will determine  a $\tilde{F_2}\in (A_2\widehat{\otimes} B_2)^* $. We have $F(v\ot w)=F_1(v,w)=F_2(\phi(v), \psi(w))=\tilde{F_2}(\phi(v)\ot \psi(w))= \tilde{F_2}\circ (\phi\widehat{\ot} \psi)(v\ot w)$ for all $v\in A_1$ and $w\in B_1$. This implies that $F= \tilde{F_2}\circ (\phi\widehat{\ot}  \psi)$  on $A_1\ot B_1$, and so by continuity $F= \tilde{F_2}\circ (\phi\widehat{\ot}  \psi)$.  Now let $z\in (\phi\widehat{\ot}\psi)^{-1}(I_2\widehat{\ot}J_2)$. We may assume that $\|z\|_{\wedge}< 1$. Then $\phi\widehat{\ot}\psi(z)\in I_2\widehat{\ot}J_2$ and $\|\phi\widehat{\ot}\psi(z)\|_{\wedge}<1$. So $\phi\widehat{\ot}\psi(z)= \ds\sum_{k=1}^{\infty}\alpha_k (i_{k}\ot j_{k})\beta_k= \ds\sum_{k=1}^{\infty}
\ds\sum_{m,n,p,q}\alpha^{k}_{1,mn} (i^k_{mp} \ot j^k_{nq})\beta^{k}_{pq,1}$ with $i^k_{mp}\in I_2$, $j^k_{nq}\in J_2$ and $\|i^k_{mp}\|<1$, $\|j^k_{nq}\|<1$~\cite{effros}. Since $\phi$ and $\psi$ are complete quotient maps and  $F(\phi^{-1}(0)\widehat{\ot}A_2+ \phi^{-1}(I_2)\widehat{\ot}\psi^{-1}(J_2)+A_1\widehat{\ot}\psi^{-1}(0))=0$, so it follows that $F(z)=0$.  Hence \[[\phi^{-1}(0)\widehat{\ot}A_2+ \phi^{-1}(I_2)\widehat{\ot}\psi^{-1}(J
_2)+A_1\widehat{\ot}\psi^{-1}(0)]^{\perp} \subseteq (\phi\widehat{\ot}\psi)^{-1}(I_2\widehat{\ot}J_2)^{\perp}.  \] Since the annihilator is reverse ordering, so converse is trivial.
\end{pf}

Now we are ready to prove the main result.
\begin{thm}
If $A$ and $B$ are $C^*$-algebras such that number of closed ideals in $A$ is finite. Then every closed ideal in $A\widehat{\ot}B$ is a finite sum of product ideals.
\end{thm}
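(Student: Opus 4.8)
The plan is to argue by induction on the number $n$ of closed ideals of $A$, exploiting the fact that a $C^*$-algebra with finitely many closed ideals has a minimal nonzero closed ideal $M$, and that such an $M$ is automatically simple: a closed ideal of a closed ideal of a $C^*$-algebra is again a closed ideal of the whole algebra (via an approximate identity of $M$), so minimality of $M$ leaves it with only the trivial closed ideals. The base case is $A$ simple, where I would show that every closed ideal $K$ of $A\widehat{\ot} B$ has the form $A\widehat{\ot} N$ for a closed ideal $N$ of $B$. The tool here is the totality of the slice-type maps: the left slices $L_\psi:A\widehat{\ot}B\to B$, $L_\psi(a\ot b)=\psi(a)b$ for $\psi\in A^*$, separate the points of $A\widehat{\ot}B$ (the analogue on $A\widehat{\ot}B$ of Proposition \ref{on7}), so $K$ is determined by the closed ideal $N$ of $B$ generated by its slices, giving $K\subseteq A\widehat{\ot}N$; conversely, multiplying an element of $K$ on both sides by $A\ot 1$ and using that every nonzero element generates $A$ as a closed ideal forces $A\widehat{\ot}N\subseteq K$.

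For the inductive step, fix a minimal (hence simple) closed ideal $M$ of $A$. Since the operator space projective tensor product preserves complete quotient maps, the quotient $\phi:A\to A/M$ induces a short exact sequence
\[0\longrightarrow M\widehat{\ot} B\longrightarrow A\widehat{\ot} B\stackrel{q}{\longrightarrow}(A/M)\widehat{\ot} B\longrightarrow 0,\]
with $q=\phi\widehat{\ot}\mathrm{id}$ and $\ker q=M\widehat{\ot} B=:L$. As $A/M$ has strictly fewer closed ideals than $A$, the induction hypothesis applies to $(A/M)\widehat{\ot}B$, giving $\overline{q(K)}=\sum_{i=1}^r \overline{I_i}\widehat{\ot} J_i$, a finite sum of product ideals; and since $M$ is simple, the base case applied inside $L=M\widehat{\ot}B$ gives $K\cap L=M\widehat{\ot} N_0$ for a single closed ideal $N_0$ of $B$. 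Pulling the image decomposition back through $q$ via the inverse-image formula of Proposition \ref{p2} (with $\psi=\mathrm{id}$, so $\psi^{-1}(0)=0$) yields $q^{-1}(\overline{I_i}\widehat{\ot} J_i)=M\widehat{\ot}B+\phi^{-1}(\overline{I_i})\widehat{\ot} J_i$, whence $K\subseteq L+\sum_i \phi^{-1}(\overline{I_i})\widehat{\ot} J_i$. The proposed decomposition is then $K=M\widehat{\ot} N_0+\sum_{i=1}^r \phi^{-1}(\overline{I_i})\widehat{\ot} J_i$, a finite sum of product ideals of $A\widehat{\ot}B$.

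The main obstacle is the reconstruction of $K$ from these two pieces, and it reduces to a single claim: that each lifted product ideal $\phi^{-1}(\overline{I_i})\widehat{\ot} J_i$ is actually contained in $K$, rather than merely in $q^{-1}(\overline{q(K)})=\overline{K+L}$. Indeed, once this lifting is known, writing $u\in K$ as $u=v+w$ with $v\in L$ and $w\in\sum_i\phi^{-1}(\overline{I_i})\widehat{\ot} J_i\subseteq K$ forces $v=u-w\in K\cap L=M\widehat{\ot}N_0$, and both inclusions of the proposed decomposition follow at once. The difficulty is precisely the Banach-algebra extension problem for the sequence above, where image and kernel-intersection data need not determine the ideal; I would resolve it by computing the intersections with $L=M\widehat{\ot}B$ through Lemma \ref{on13} (identifying sums of product ideals with the corresponding Haagerup sum intersected back into $A\widehat{\ot}B$) and by reducing the membership of the elementary tensors of $\phi^{-1}(\overline{I_i})\widehat{\ot} J_i$ in $K$ to the Haagerup and then minimal tensor products via Proposition \ref{p1}, where such membership is detectable. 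With the lifting in hand, the decomposition is a finite sum of product ideals, completing the induction; the case in which both $A$ and $B$ have finitely many closed ideals is recovered as \cite{r4}, Corollary 2.21.
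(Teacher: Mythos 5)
Your overall strategy (induction on the number of closed ideals, peeling off a minimal --- hence simple --- closed ideal $M$ of $A$, and analysing $K\cap(M\widehat{\ot}B)$ together with the image of $K$ in $(A/M)\widehat{\ot}B$) begins parallel to the paper's, but the reduction you propose is not the one the paper carries out, and the step you yourself flag as the main obstacle --- that each lifted product ideal $\phi^{-1}(\overline{I_i})\widehat{\ot}J_i$ lies in $K$ --- is not merely unproved: it is false, and with it the proposed decomposition $K=M\widehat{\ot}N_0+\sum_i\phi^{-1}(\overline{I_i})\widehat{\ot}J_i$. Take $A=\C\oplus\C$, $M=\C\oplus 0$, $B=\C$, so that $A\widehat{\ot}B\cong\C\oplus\C$ and $q=\phi\widehat{\ot}\mathrm{id}$ is the second-coordinate projection, and let $K=0\oplus\C$ (a product ideal, so the theorem certainly holds for it). Then $K\cap\ker q=0$, so $N_0=0$, while $\overline{q(K)}=(A/M)\widehat{\ot}B$ is a single product ideal with nonzero first factor; its lift is $\phi^{-1}(A/M)\widehat{\ot}B=A\widehat{\ot}B$, which is not contained in $K$, and your formula returns $A\widehat{\ot}B$ instead of $K$. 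The pair $\bigl(K\cap\ker q,\ \overline{q(K)}\bigr)$ simply does not determine $K$ --- this is exactly the extension problem you name --- and Proposition \ref{p1} cannot rescue the containment, because the offending elementary tensors do not lie in $K_h$ either.

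The paper escapes this by never quotienting the whole algebra by $M\widehat{\ot}B$. Writing $I$ for the minimal ideal, it (i) defines $J$ by $K\cap(I\widehat{\ot}B)=I\widehat{\ot}J$; (ii) imports from the Allen--Sinclair--Smith theory of ideals in $A\ot_h B$ the upper bound $K\subseteq A\widehat{\ot}J+M\widehat{\ot}B$ with $M=\mathrm{ann}(I)$ (the annihilator of $I$, not the kernel of a quotient) --- this containment is the structural input your outline is missing, and it is obtained by passing to $K_h$ and returning via Lemma \ref{on13}; (iii) splits $K=K\cap(A\widehat{\ot}J)+K\cap(M\widehat{\ot}B)$ using Cohen factorization against the bounded approximate identity of $A\widehat{\ot}J+M\widehat{\ot}B$; and (iv) only then applies a quotient map, namely $\pi\widehat{\ot}\mathrm{id}:A\widehat{\ot}J\to(A/I)\widehat{\ot}J$, whose kernel $I\widehat{\ot}J$ is already known to sit inside $K\cap(A\widehat{\ot}J)$ --- that containment is what makes the pullback via Proposition \ref{p2} exact and the lifting automatic, in contrast to your quotient by $M\widehat{\ot}B$, whose kernel need not meet $K$ in anything large. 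In the example above this machinery gives $J=0$ and $K\subseteq\mathrm{ann}(I)\widehat{\ot}B$, correctly locating $K$. To repair your argument you would have to replace the quotient-by-$\ker q$ step with an upper bound of type (ii); the Haagerup-norm theory has to enter as the source of that containment, not merely as the membership test of Proposition \ref{p1}.
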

\begin{proof}
Proof is by induction on $n(A)$,  the number of closed ideals in $A$ counting both $\{0\}$ and $A$. If $n(A)=2$
then the result follows directly by (~\cite{r2}, Theorem 3.8). Suppose that the result is true for all $C^\ast$-algebras  with $n(A)\leq n-1$. Let $A$  be a $C^\ast$-algebra with $n(A)=n$.

Since there are only finitely many closed ideals in $n(A)$ so there exists a minimal non-zero closed ideal, say $I$, which is simple by definition. Let $K$ be a closed ideal in $A \widehat{\ot}B$  then $K\cap (I \widehat{\ot}B)$ is a closed ideal in $I \widehat{\ot}B$. So it is equal to
$I \widehat{\ot}J$ for some closed ideal $J$ in $B$ by (~\cite{r2}, Theorem 3.8). Consider the closed ideal $K_h$, the closure of $i(K)$ in $\|\cdot\|_h$, where $i: A\widehat{\ot} B \to A\ot_{h} B$ is an injective map(~\cite{r1}, Theorem 1). Then   $K_h \cap (I \ot_{h}B)= I \ot_{h}\tilde{J}$  for some closed ideal $\tilde{J}$ in $B$ by (~\cite{sinc}, Proposition 5.2). We first show that $\tilde{J}=J$. Since the map $i: A\widehat{\ot} B \to A\ot_{h} B$ is injective so $K_h \cap (A\widehat{\ot} B)\supseteq K$.  Thus  $I \ot_{h} \tilde{J} \cap (A\widehat{\ot} B) \supseteq K \cap (I \ot_{h} B)$, which by using (~\cite{smith}, Corollary 4.6), (~\cite{r3}, Proposition 4),  and Lemma  \ref{on13}, gives that $I \widehat{\ot} \tilde{J} \supseteq I \widehat{\ot} J$ and so $\tilde{J}\supseteq J$. To see the equality, let $\tilde{j}\in \tilde{J} $. Take any $i\in I$ then $i\ot \tilde{j}\in K_h$ so it belongs to  $K$ by Proposition \ref{p1}. Thus $i\ot \tilde{j}\in I \widehat{\ot}J$. Hence $\tilde{j}\in J$.

As in (~\cite{sinc}, Theorem 5.3), $K_h \subseteq A  \ot_{h}J+ M \ot_{h} B$ for $M=ann(I)$. Thus $K \subseteq A  \widehat{\ot}J+ M \widehat{\ot} B$ by Lemma \ref{on13}. Since $M$ cannot contain $I$, so $n(M)\leq n(A)-1=n-1$.
Thus $K\cap(M \widehat{\ot} B)$, which is a closed ideal in  $M \widehat{\ot} B$, is a finite sum of product ideals by induction hypothesis. Let $T=K \cap (A  \widehat{\ot}J)$ then clearly $T$ contains $I\widehat{\ot} J$. Corresponding to  the complete quotient map  $\pi: A \to A/I$, we have  a quotient map
$\pi\widehat{\ot}id: A\widehat{\ot} J \to A/I  \widehat{\ot} J$ with kernel $I\widehat{\ot} J$ and $\pi\widehat{\ot}id(T)$ is a closed ideal of
$A/I  \widehat{\ot} J$ (~\cite{r3}, Lemma 2). Also $n(A/I )\leq n(A)-1=n-1$ and so by the induction hypothesis $\pi\widehat{\ot}id(T)= \sum_{r=1}^t I_r\widehat{\ot}J_r$, where $I_r$ and  $J_r$  are closed  ideals in $A/I $ and $J$, for $r=1,\cdots, t$, respectively. Thus, by (~\cite{r3}, Lemma 2) and Theorem \ref{p2}, $T= \sum_{r=1}^t \pi^{-1}(I_r)\widehat{\ot}J_r+ I\widehat{\ot}J$. So $K\cap (A  \widehat{\ot}J)+ K \cap (M \widehat{\ot} B)$ is a finite sum of product ideal and hence closed by (~\cite{r2}, Proposition 3.2).

We now claim that  $K \cap (A  \widehat{\ot}J +M \widehat{\ot} B)=K\cap (A  \widehat{\ot}J)+ K \cap (M \widehat{\ot} B)$.\\
Let $z\in K \cap (A  \widehat{\ot}J +M \widehat{\ot} B)$. Since the closed ideal
$A  \widehat{\ot}J +M \widehat{\ot} B$ has a bounded  approximate identity  so there exist $x,y\in A  \widehat{\ot}J +M \widehat{\ot} B$ such that $z=xy$ and $y$ belongs to the least closed ideal  of $ A  \widehat{\ot}J +M \widehat{\ot} B$  containing $z$ (~\cite{dunc}, \S 11, Corollary 11). This implies that $y\in K$ so   $z\in K\cap (A  \widehat{\ot}J)+ K \cap (M \widehat{\ot} B)$. Hence   $K\cap (A  \widehat{\ot}J+ M \widehat{\ot} B)=K\cap (A  \widehat{\ot}J)+ K \cap (M \widehat{\ot} B)$. Therefore  $K$ is a finite sum of product ideals.
\end{proof}
\section{ Inner Automorphisms of  $A\widehat{\ot} B$}
For  unital $C^*$-algebras $A$ and $B$,  isometric automorphism of $ A\widehat{\ot} B$ is either of the form  $\phi \widehat{\ot} \psi$ or $\nu \widehat{\ot} \rho \circ \tau$, where $\phi: A\to A$, $\psi:B\to B$, $\nu: B\to A$ and $\rho:A \to B$ are isometric isomorphisms (~\cite{r1}, Theorem 4). In the following, we characterize the isometric inner $^*$-automorphisms of  $ A\widehat{\ot} B$  completely.

%

%
%

\begin{prop} \label{on12}
For unital $C^*$-algebras $A$ and $B$,  the map  $\phi\widehat{\ot} \psi$ is inner automorphism of  $A\widehat{\ot} B$  if and only if  $\phi$ is inner automorphism of  $A$ and $\psi$ is inner automorphism  of   $B$.
\end{prop}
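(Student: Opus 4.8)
The plan is to split the equivalence into its two implications, the reverse one being a short computation and the forward one carrying all the content. Throughout, ``inner'' means implemented by a unitary of the relevant unital Banach $*$-algebra, and I will write $\mathrm{Ad}_w(x)=wxw^*$.

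For the easy direction, suppose $\phi=\mathrm{Ad}_u$ and $\psi=\mathrm{Ad}_v$ for unitaries $u\in A$, $v\in B$. Since $A$ and $B$ are unital, $w:=u\ot v$ lies in $A\widehat{\ot}B$ and is unitary there, because $ww^*=uu^*\ot vv^*=1\ot1=w^*w$. On elementary tensors $w(a\ot b)w^*=uau^*\ot vbv^*=\phi(a)\ot\psi(b)$, so by linearity and $\|\cdot\|_{\wedge}$-continuity $\phi\widehat{\ot}\psi=\mathrm{Ad}_w$ is inner. The work is therefore entirely in proving that if $\phi\widehat{\ot}\psi$ is inner then each factor is inner.

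For the forward direction, assume $\phi\widehat{\ot}\psi=\mathrm{Ad}_w$ for a unitary $w\in A\widehat{\ot}B$. The first move is to transport the problem into a genuine $C^*$-algebra: since $\|\cdot\|_{\min}\leq\|\cdot\|_{\wedge}$, the identity on $A\ot B$ extends to a contractive unital $*$-homomorphism $q:A\widehat{\ot}B\to A\ot_{\min}B$, under which $U:=q(w)$ is a unitary and $\mathrm{Ad}_U$ agrees with $\phi\ot_{\min}\psi$ on the dense subalgebra $A\ot B$, hence everywhere. In particular $U$ simultaneously normalises $A\ot1$ and $1\ot B$, with $U(a\ot1)U^*=\phi(a)\ot1$ and $U(1\ot b)U^*=1\ot\psi(b)$. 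It thus suffices to prove the purely $C^*$-algebraic statement that innerness of $\phi\ot_{\min}\psi$ forces innerness of $\phi$ (and symmetrically of $\psi$). To recover a candidate for $\phi$, fix a state $g$ on $B$ with GNS triple $(\pi_g,H_g,\xi)$, set $\tilde U:=(\mathrm{id}_A\ot\pi_g)(U)$, a unitary in $A\,\bar{\ot}\,B(H_g)$, choose an orthonormal basis $(e_k)$ of $H_g$ with $e_0\propto\xi$, and let $a_k:=(\mathrm{id}\ot\langle\,\cdot\,e_k,e_0\rangle)(\tilde U)\in A$ be the entries of the zeroth row. Slicing $U(a\ot1)U^*=\phi(a)\ot1$ by the vector state $\omega_\xi$ then yields the Kraus representation $\phi(a)=\sum_k a_k\,a\,a_k^*$ with $\sum_k a_ka_k^*=1$; the same construction on the $B$-side produces the analogous data for $\psi$.

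The remaining step is to upgrade this completely positive implementation to a single unitary, and this is where I expect the main obstacle to lie. Because $\phi$ is a $*$-automorphism, the unital completely positive map $a\mapsto\sum_k a_kaa_k^*$ is multiplicative on all of $A$; a Stinespring analysis (equivalently Choi's multiplicative-domain theorem) forces the Stinespring projection to commute with $A$, which translates into the relations $a_k^*a_l\in A'\cap A\subseteq Z(A)$ together with the intertwining identity $\phi(a)a_k=a_k a$ for every $k$. Thus each $a_k$ is an intertwiner between $\mathrm{id}$ and $\phi$, the matrix $[a_k^*a_l]$ is a projection with central entries, and the $(a_k)$ form a Parseval frame for the intertwiner module over $Z(A)$. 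Manufacturing from such a frame a \emph{single} unitary $u\in A$ with $\phi=\mathrm{Ad}_u$ is a central/Morita-triviality question that can genuinely fail for an arbitrary frame (the automorphism may be only ``fibrewise inner'' over $Z(A)$, with an obstruction of Picard type). What must rescue the argument is that the frame descends from one honest global unitary $U$ that implements the \emph{product} automorphism and normalises both $A\ot1$ and $1\ot B$; the plan is to exploit this joint structure — concretely the relative-commutant identifications $(A\ot1)'\cap(A\ot_{\min}B)=Z(A)\ot_{\min}B$ and $(1\ot B)'\cap(A\ot_{\min}B)=A\ot_{\min}Z(B)$, the central decomposition $Z(A\ot_{\min}B)=Z(A)\ot_{\min}Z(B)$, and the unitality of $A$ and $B$ (which guarantees $\phi(1)=1$ and $\phi(Z(A))=Z(A)$) — to show that the central projection $[a_k^*a_l]$ is in fact trivial, so that the frame collapses to a unitary $u\in A$ and, symmetrically, to a unitary $v\in B$. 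Verifying that the slice construction is consistent across enough states of $B$ to trivialise this obstruction is the delicate point I expect to require the most care.
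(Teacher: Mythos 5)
Your reduction to the minimal tensor product is exactly the paper's argument: the canonical $*$-homomorphism $i\colon A\widehat{\ot}B\to A\ot_{\min}B$ has dense range and intertwines $\phi\widehat{\ot}\psi$ with $\phi\ot_{\min}\psi$, so the image of the implementing element implements $\phi\ot_{\min}\psi$ on $A\ot_{\min}B$. At that point the paper stops and simply invokes Wassermann's theorem (\cite{wass}, Theorem 1), which is precisely the statement that innerness of $\phi\ot_{\min}\psi$ for unital $C^*$-algebras forces $\phi$ and $\psi$ to be inner. Everything in your forward direction after the first paragraph is therefore an attempt to reprove that theorem from scratch, and that attempt is where the genuine gap lies. (A minor additional point: the paper takes ``inner'' to mean implemented by an invertible element, $x\mapsto uxu^{-1}$; your assumption that the implementing element of the Banach $*$-algebra $A\widehat{\ot}B$ is unitary is a mild loss of generality, though it is harmless once you pass to $A\ot_{\min}B$ and polar-decompose there.)

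Concretely, your GNS/slice construction does yield intertwiners $a_k$ with $a_k a=\phi(a)a_k$, $a_l^*a_k\in Z(A)$ and $\sum_k a_k a_k^*=1$, but, as you yourself say, such a Parseval frame of intertwiners only exhibits $\phi$ as ``fibrewise inner'' over $Z(A)$, and collapsing it to a single implementing element is exactly the hard content of the theorem. You assert that the joint structure coming from $U$ normalising both $A\ot 1$ and $1\ot B$ ``must rescue the argument'' and defer the trivialisation of the obstruction to ``the delicate point,'' but no mechanism for that trivialisation is given, so the proof is incomplete precisely at the decisive step. The efficient fix is to cite \cite{wass} as the paper does; alternatively, Wassermann's own technique — slicing by a \emph{pure} state, for which the slice map is multiplicative relative to the appropriate relative commutant, so that a single slice of $U$ already implements the automorphism — is reproduced in this paper in a related setting (see Lemma \ref{on18} and the proof of Theorem \ref{on4}), and is a cleaner route than the Kraus-frame analysis you sketch.
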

\begin{pf}
Suppose that $\phi\widehat{\ot} \psi$ is implemented by $u\in A\widehat{\ot}B$. We  will show that $\phi\ot_{\min} \psi$  is implemented by $i(u)$, where   $i$ is $^*$-homomorphism from $ A\widehat{\ot}B$ into $A\ot_{\min}B$~\cite{r1}. It is easy to see that $i \circ \phi\widehat{\ot} \psi= \phi\ot_{\min} \psi\circ i$. So,  for $x\in A\widehat{\ot}B$,  $\phi\ot_{\min} \psi( i(x))=i(ux u^{-1})=i(u)i(x)i(u^{-1})= i(u)i(x)i(u)^{-1}$. As $i(A\widehat{\otimes} B)$ is $\|\cdot\|_{min}$-dense in $A\otimes_{\min} B$, so $\phi\ot_{\min} \psi$  is implemented by $i(u)$. Hence the result follows from (\cite{wass}, Theorem 1). Converse is trivial.
\end{pf}

Note that the above result can also be proved for the Banach algebra $A\ot_h B$.

We now characterize the isometric inner automorphism of $A\widehat{\ot} B$ for $C^*$-algebras $A$ and $B$ other than $M_n$.

\begin{thm}
For unital $C^*$-algebras $A$ and $B$ other than $M_n$ for some $n\in \mathbb{N}$,  the isometric inner $^*$-automorphism of  $A\widehat{\ot} B$  is of the form  $\phi\widehat{\ot} \psi$, where $\phi$ and $\psi$ are inner $^*$-automorphisms of  $A$ and $B$, respectively.
\end{thm}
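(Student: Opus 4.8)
The plan is to reduce the classification to the two normal forms for isometric automorphisms of $A\widehat{\ot} B$ already provided by (~\cite{r1}, Theorem 4), and then to eliminate the ``flip'' form using the hypothesis that neither factor is a full matrix algebra. Concretely, let $\Theta$ be an isometric inner $^*$-automorphism of $A\widehat{\ot} B$, say $\Theta=\mathrm{Ad}(u)$ for a unitary $u\in A\widehat{\ot} B$. By (~\cite{r1}, Theorem 4), $\Theta$ is of one of the two shapes $\phi\widehat{\ot}\psi$ or $(\nu\widehat{\ot}\rho)\circ\tau$, where $\tau$ is the flip; the latter can occur only when $A\cong B$ (via the isometric isomorphisms $\nu\colon B\to A$, $\rho\colon A\to B$), in which case it sends $a\ot b\mapsto\nu(b)\ot\rho(a)$.

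In the product case $\Theta=\phi\widehat{\ot}\psi$, Proposition \ref{on12} immediately yields that $\phi$ and $\psi$ are inner automorphisms of $A$ and $B$. It then remains only to check that they are $^*$-preserving. I would note that $\Theta((a\ot b)^*)=\Theta(a\ot b)^*$ forces $\phi(a^*)\ot\psi(b^*)=\phi(a)^*\ot\psi(b)^*$ on elementary tensors; comparing rank-one tensors produces a single unimodular scalar $\lambda$ with $\phi(a^*)=\lambda\,\phi(a)^*$ and $\psi(b^*)=\lambda^{-1}\psi(b)^*$, and evaluating at the units (using $\phi(1)=1$, $\psi(1)=1$) gives $\lambda=1$. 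Hence $\phi,\psi$ are inner $^*$-automorphisms, as required.

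The substance of the proof is to rule out the flip case $\Theta=(\nu\widehat{\ot}\rho)\circ\tau$. Here I would transport the innerness to the minimal tensor product exactly as in Proposition \ref{on12}: the injective $^*$-homomorphism $i\colon A\widehat{\ot} B\to A\ot_{\min} B$ satisfies $i\circ\Theta=\Theta_{\min}\circ i$, where $\Theta_{\min}(a\ot b)=\nu(b)\ot\rho(a)$, and since $i(A\widehat{\ot} B)$ is $\|\cdot\|_{\min}$-dense, $\Theta_{\min}=\mathrm{Ad}(i(u))$ is an inner $^*$-automorphism of $A\ot_{\min} B$. A first obstruction comes from the centre: an inner automorphism fixes $Z(A\ot_{\min} B)=Z(A)\ot_{\min} Z(B)$ pointwise, whereas $\Theta_{\min}$ carries $z\ot 1\mapsto 1\ot\rho(z)$, which can equal $z\ot 1$ for all central $z$ only if $Z(A)=\C 1$. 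This already forces $A$ (hence $B$) to be a factor, excluding the abelian and non-trivial-centre cases. To eliminate the remaining infinite factors (such as $B(H)$) and isolate exactly the matricial case, I would invoke (~\cite{wass}, Theorem 1), which classifies the inner $^*$-automorphisms of $A\ot_{\min} B$ and shows that a flip-type automorphism is inner precisely when both factors are full matrix algebras $M_n$. Since $A,B\neq M_n$ by hypothesis, $\Theta_{\min}$ cannot be inner, a contradiction; therefore the flip case does not arise and $\Theta=\phi\widehat{\ot}\psi$.

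The step I expect to be the main obstacle is this last one. The centre computation only rules out non-factors; to see that the flip unitary fails to lie in the spatial tensor product for every non-matricial factor one really needs the full force of Wassermann's classification. Thus the care lies in packaging the reduction $i\circ\Theta=\Theta_{\min}\circ i$ so that (~\cite{wass}, Theorem 1) applies verbatim to the composite $(\nu\ot\rho)\circ\tau$, rather than attempting to re-prove the matrix-algebra dichotomy by hand.
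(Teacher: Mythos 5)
Your reduction to the two normal forms of (\cite{r1}, Theorem 4) and the use of Proposition \ref{on12} in the product case match the paper, but the heart of the theorem --- ruling out the flip case --- is not actually proved in your proposal; it is delegated to a citation that does not do the work. Wassermann's Theorem 1 (as it is used in Proposition \ref{on12} of this paper) says that if a \emph{product} automorphism $\phi\ot_{\min}\psi$ of $A\ot_{\min}B$ is inner then $\phi$ and $\psi$ are inner; it does not classify inner automorphisms of $A\ot_{\min}B$ in general, and in particular it says nothing about when a flip-type automorphism $(\nu\ot\rho)\circ\tau$ is inner. Your centre computation only yields that $A$ and $B$ are factors, which, as you concede, leaves all infinite factors untouched. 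So the step you yourself identify as ``the main obstacle'' is precisely the step that is missing, and moving to $A\ot_{\min}B$ actually discards the one piece of structure that makes the problem tractable, namely that the implementing element lies in $A\widehat{\ot}B$.

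The paper's proof fills this gap with a direct, self-contained argument in the projective tensor product. First, simplicity of $A$ and $B$ is forced by ideal preservation: if $J$ is a nonzero proper closed ideal of $B$, then the inner automorphism preserves $A\widehat{\ot}J$, and applying $(\nu\widehat{\ot}\rho)\circ\tau$ to $1\ot x$ for $x\in J$ gives $\nu(x)\ot 1\in\ker(id\widehat{\ot}q_J)$, whence $\nu(x)=0$ and $J=0$. Second, and crucially, one approximates the implementing element $d$ and its inverse by \emph{finite} tensors $z=\sum_i x_i\ot y_i$ and $w=\sum_j u_j\ot v_j$ in the projective norm; the identity $\nu(b)\ot\rho(a)=d(a\ot b)d^{-1}$ then gives $\|1\ot\rho(a)-\sum_{i,j}x_iau_j\ot y_iv_j\|_{\wedge}\le\frac{1}{2}\|a\|$, and slicing with a norm-one functional $f$ with $f(1)=1$ shows that every $b\in B$ lies within $\frac{1}{2}\|b\|$ of the fixed finite-dimensional subspace spanned by the $y_iv_j$. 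Riesz's lemma then forces $B$ to be finite-dimensional, and simplicity plus Wedderburn--Artin gives $B=M_n$, contradicting the hypothesis. If you want to complete your version, you must either supply an argument of this kind or locate and verify a genuine classification of inner flip-type automorphisms of $A\ot_{\min}B$ (which is not what \cite{wass}, Theorem 1 provides), and in either case handle the composite $(\nu\ot\rho)\circ\tau$ rather than the bare flip.
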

\begin{pf}
Suppose that $\theta$ is the isometric inner $^*$-automorphism of  $A\widehat{\ot} B$. So $\theta= \phi\widehat{\ot} \psi$, where $\phi$ and $\psi$ are  $^*$-automorphisms of  $A$ and $B$, respectively or $\theta=\nu \widehat{\ot} \rho \circ \tau$, where $\nu: B\to A$ and $\rho:A\to B$ are $^*$-isomorphisms, $\tau$ is a flip map~\cite{r1}. In view of Proposition \ref{on12}, it  suffices to  show that the second case will never arise for $C^*$-algebras $A$ and $B$ other than $M_n$. Let $J\neq \{0\}$ be a proper closed ideal in $B$ and  $I_1=A\widehat{\ot}J$,  which is a closed ideal in $A\widehat{\ot} B$ by (~\cite{kumar}, Theorem 5).  Since $\theta=\nu \widehat{\ot} \rho \circ \tau$ is inner so it preserves $A\widehat{\ot}J$. Let $x\in J$ we then have
$1\ot x\in I_1$ and so $\theta(1\ot x)\in I_1$. Thus $\nu(x)\ot 1\in \ker(id\widehat{\ot}q_{J})$~\cite{r3} and so $\nu(x)\ot (1+J)=0$. Therefore, $\nu(x)=0$ giving that $x=0$. Hence $J=\{0\}$ and so $B$ is simple. Similarly, one can show that $A$ is simple.  By hypothesis there exists $d\in A\widehat{\ot} B$ which implements $\theta$ so that $\nu(b) \ot \rho(a)=d(a\ot b)d^{-1}$ for all $a\in A$ and $b\in B$.  Since the set of invertible elements in a Banach algebra is an open set, so we can choose $z,w\in A\ot B$ such that $\|z-d\|_{\wedge}< \frac{1}{4}\|d^{-1}\|_{\wedge}^{-1}$, $\|z\|_{\wedge}<\|d\|_{\wedge}+1$, and $\|w-d^{-1}\|_{\wedge}< \frac{1}{4}(\|d\|_{\wedge}+1)^{-1}$. Thus, for $z=\ds\sum_{i=1}^{r}x_i\ot y_i$ and $w=\ds\sum_{j=1}^{s}u_j\ot v_j$, we have $\|\nu(b)\ot \rho(a)-z(a\ot b)w\|_{\wedge}\leq \|d(a\ot b)d^{-1}-z(a\ot b)d^{-1}\| + \|z(a\ot b)d^{-1}- z(a\ot b)w\|_{\wedge}\leq \frac{1}{2}\|a\|\|b\|$,  hence $\|1\ot \rho(a)- \ds\sum_{i=1,j=1}^{r,s}x_ia u_j \ot y_iv_j\|_{\wedge}\leq \frac{1}{2}\|a\|$. Now choose $f\in A^*$ such that $f(1)=\|f\|=1$. Therefore, $\| \rho(a)- \ds\sum_{i=1,j=1}^{r,s}f(x_ia u_j) y_iv_j\|\leq \frac{1}{2}\|a\|$ for all $a\in A$. Take any $b\in B$, $\rho$ being an isomorphism, there exists a unique $a\in A$ such that $b=\rho(a)$. Thus
$\| b- \ds\sum_{i=1,j=1}^{r,s}f(x_i \rho^{-1}(b) u_j) y_iv_j\|\leq \frac{1}{2}\|b\|$ for any $b\in B$. Now define a finite dimensional subspace $D$ of $B$ by
\[D=span\{y_iv_j: i=1,2,\dots,r,\;\; j=1,2,\dots, s\}.\] The above inequality implies that $D \cap B[b, \frac{\|b\|}{2}]\neq \emptyset$,
where $B[b, \frac{\|b\|}{2}]$ is the closed ball center at $b$ and radius $\frac{\|b\|}{2}$.  So Riesz Lemma  gives that $D=B$ and hence, by the classical Wedderburn-Artin Theorem, $B=M_n$ for some $n\in \mathbb{N}$. Similarly, $A=M_n$ for some $n\in \mathbb{N}$.
\end{pf}

\begin{cor}
For an infinite dimensional separable Hilbert space $H$, every inner automorphism of $B(H)\widehat{\ot} B(H)$ is of the form $\phi\widehat{\ot}\psi$, where $\phi$ and $\psi$ are inner automorphisms of $B(H)$.
\end{cor}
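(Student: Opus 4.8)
The plan is to specialise the preceding theorem to $A=B=B(H)$. First I would verify the hypotheses: for an infinite dimensional separable Hilbert space $H$, $B(H)$ is a unital $C^*$-algebra of infinite dimension, whereas each $M_n$ is finite dimensional, so $B(H)\neq M_n$ for every $n\in\mathbb{N}$. Consequently the theorem applies to the pair $(B(H),B(H))$ and tells us that every isometric inner $^*$-automorphism of $B(H)\widehat{\ot}B(H)$ is of the form $\phi\widehat{\ot}\psi$ with $\phi,\psi$ inner $^*$-automorphisms of $B(H)$; in particular the flip alternative $\nu\widehat{\ot}\rho\circ\tau$ from (\cite{r1}, Theorem 4) is ruled out precisely because neither tensor factor is a matrix algebra.

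The substance of the corollary is then to bridge the gap between an arbitrary inner automorphism and the isometric inner $^*$-automorphisms handled by the theorem. Writing such a map as $\mathrm{Ad}_u$, I would first use that $B(H)\widehat{\ot}B(H)$ is a Banach $^*$-algebra (\cite{kumar}, Proposition 3) with isometric involution: when $u$ is a unitary, the identity $(uxu^{-1})^{*}=u\,x^{*}u^{-1}$ shows at once that $\mathrm{Ad}_u$ commutes with $^*$, so the automorphism is $^*$-preserving. To recover the isometry I would attempt to transport the situation to the minimal tensor product along the $^*$-homomorphism $i:B(H)\widehat{\ot}B(H)\to B(H)\ot_{\min}B(H)$ with dense range already exploited in Proposition \ref{on12}: there $\mathrm{Ad}_u$ becomes $\mathrm{Ad}_{i(u)}$ on a $C^*$-algebra, where unitary implementation is automatically norm preserving, and I would then try to feed this back, together with the classification of isometric automorphisms of $B(H)\widehat{\ot}B(H)$, to conclude that $\mathrm{Ad}_u$ preserves the projective norm as well. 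Once $\mathrm{Ad}_u$ is seen to be an isometric inner $^*$-automorphism, the first paragraph delivers the product form.

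For the reverse inclusion I would simply quote Proposition \ref{on12}: if $\phi$ and $\psi$ are inner automorphisms of $B(H)$, then $\phi\widehat{\ot}\psi$ is an inner automorphism of $B(H)\widehat{\ot}B(H)$, so every product map of this type is genuinely inner, and the two directions together give the exact description claimed. The step I expect to be the main obstacle is the isometry half of the reduction: a unitary of $B(H)\widehat{\ot}B(H)$ can have projective norm strictly larger than $1$, so $\mathrm{Ad}_u$ is not norm preserving for formal reasons alone. Establishing the isometry is therefore where one must use the specific structure of $B(H)$ — its simplicity, and the reduction of the centre of $B(H)\widehat{\ot}B(H)$ to the scalars, which forces the implementing element to be a scalar multiple of a unitary — rather than a general Banach-algebra argument.
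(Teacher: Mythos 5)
Your first paragraph reproduces what the paper actually does: the corollary carries no proof of its own and is meant as an immediate specialization of the preceding theorem, the only hypothesis to check being that $B(H)$ is infinite dimensional, hence not $M_n$ for any $n$, with Proposition \ref{on12} supplying the converse direction. Up to that point your proposal and the paper's (implicit) argument coincide.

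The rest of your proposal addresses a real discrepancy --- the theorem classifies only \emph{isometric inner $^*$-automorphisms}, while the corollary speaks of arbitrary inner automorphisms --- but the bridge you sketch does not close it. First, an inner automorphism of the Banach $^*$-algebra $B(H)\widehat{\ot}B(H)$ is $x\mapsto uxu^{-1}$ for an arbitrary invertible $u$, not a unitary; your verification that $\mathrm{Ad}_u$ commutes with the involution presupposes $u$ unitary, which is not given, and the suggestion that triviality of the centre forces $u$ to be a scalar multiple of a unitary is a polar-decomposition argument that is only available in a $C^*$-algebra for a $^*$-preserving map, neither of which you have at that stage. Second, the isometry step is left entirely open: $i:B(H)\widehat{\ot}B(H)\to B(H)\ot_{\min}B(H)$ is a contractive injection with dense range, and knowing that $\mathrm{Ad}_{i(u)}$ is isometric for the minimal norm gives no control over the operator space projective norm of $\mathrm{Ad}_u$ --- as you concede yourself. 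As written, your argument therefore proves the corollary only for isometric inner $^*$-automorphisms, which is exactly the content of the theorem. (It is fair to note that the paper supplies no argument for the literal, stronger statement either; but the material you add beyond the one-line specialization does not repair this.)
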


However, by (~\cite{r1}, Theorem 5),  for unital $C^*$-algebras $A$ and $B$ with at least one being non-commutative, isometric inner automorphism of  $A\ot_h B$ is
of the form $\phi\ot_h\psi$, where $\phi$ and $\psi$ are inner automorphisms  of $A$ and of $B$, respectively.


We now give an  equivalent form of Proposition \ref{on12} in case of  operator algebras. For operator algebras $V$ and $W$, we do not know if $\phi\widehat{\ot}\psi$ is inner then  $\phi$  and $\psi$  are inner or not. However, if one of the automorphism is an identity map then we have an affirmative answer. In order to prove this, we need the following results.
\begin{prop}\label{on17}
For operator spaces  $V$ and $W$, the family $\{R_{\phi}: \phi\in V^*\}$ \emph{($\{L_{\psi}: \psi\in W^*\}$)} is total on $V\ot_h W$.
\end{prop}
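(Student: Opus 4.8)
The plan is to exploit the self-duality of the Haagerup norm, realising $V\ot_h W$ as a space of bounded functionals on $V^*\ot_h W^*$ and then testing against elementary tensors. Recall from the preceding Remark that, by self-duality, there is a completely isometric map $\Gamma\colon V^{**}\ot_h W^{**}\to (V^*\ot_h W^*)^*$. Composing it with the canonical complete isometry $V\ot_h W\hookrightarrow V^{**}\ot_h W^{**}$ (available since $\ot_h$ is injective and $V\hookrightarrow V^{**}$, $W\hookrightarrow W^{**}$ are complete isometries), I would obtain an injective map $\Gamma_0\colon V\ot_h W\to (V^*\ot_h W^*)^*$. Thus, to prove $u=0$ it suffices to show that $\Gamma_0(u)$ annihilates all of $V^*\ot_h W^*$.

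First I would record the pairing on elementary tensors. For $u=\sum_i v_i\ot w_i\in V\ot_h W$, $\phi\in V^*$ and $\psi\in W^*$, the self-duality identification gives $\Gamma_0(u)(\phi\ot\psi)=\sum_i\phi(v_i)\psi(w_i)=\psi\big(\sum_i\phi(v_i)w_i\big)=\psi(R_\phi(u))$, where the interchange is justified by the $\|\cdot\|_h$-convergence of the Haagerup representation of $u$ together with the bound $\|R_\phi\|\le\|\phi\|$. Consequently the hypothesis $R_\phi(u)=0$ for every $\phi\in V^*$ forces $\Gamma_0(u)(\phi\ot\psi)=0$ for all $\phi\in V^*$, $\psi\in W^*$; that is, $\Gamma_0(u)$ vanishes on the algebraic tensor product $V^*\ot W^*$.

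To finish, I would invoke the density of $V^*\ot W^*$ in $V^*\ot_h W^*$, the latter being by definition the completion of the former in $\|\cdot\|_h$. Since $\Gamma_0(u)$ is a bounded functional vanishing on a dense subspace, it vanishes identically, and the injectivity of $\Gamma_0$ then yields $u=0$. This establishes that $\{R_\phi:\phi\in V^*\}$ is total, and the assertion for $\{L_\psi:\psi\in W^*\}$ follows by the symmetric computation $\Gamma_0(u)(\phi\ot\psi)=\phi(L_\psi(u))$.

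The only delicate point is the first paragraph: one must verify that the composite $\Gamma_0$ is genuinely injective and that the self-duality identification is compatible with the pairing formula computed above. Both ingredients are supplied by the preceding Remark and by the injectivity of the Haagerup tensor product, so that no approximation property on $V$ or $W$ is required. This is precisely what makes the Haagerup statement cleaner than its operator space projective analogue in Proposition \ref{on7}, where a weak$^*$ approximation argument was needed.
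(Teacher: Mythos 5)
Your argument is correct, but it follows a genuinely different route from the one in the paper. The paper's proof is coordinate-based: it takes a norm-convergent representation $u=\sum_{i=1}^{\infty}a_i\ot b_i$ with $\{a_i\}$ strongly independent and, using Lemma 2.2 of \cite{sinc}, produces functionals $\phi_j\in V^*$ with $(\phi_j(a_i))_i$ within $\epsilon$ of $e_j$ in $\ell_2$; the hypothesis $R_{\phi_j}(u)=\sum_i\phi_j(a_i)b_i=0$ then yields $\|b_j\|<\epsilon\|\sum_i b_i^*b_i\|^{1/2}$ for every $\epsilon>0$, so each $b_j=0$ and $u=0$. You instead argue by duality: the injectivity of the Haagerup tensor product together with the self-duality map quoted in the paper's Remark at the end of Section 2 gives an injective $\Gamma_0\colon V\ot_h W\to (V^*\ot_h W^*)^*$, under which the hypothesis says that $\Gamma_0(u)$ annihilates every elementary tensor $\phi\ot\psi$, hence the dense subspace $V^*\ot W^*$, hence all of $V^*\ot_h W^*$. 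Both proofs are sound; the interchange $\sum_i\phi(v_i)\psi(w_i)=\psi(R_\phi(u))$ that you single out is justified exactly as you say, since the partial sums of a Haagerup representation converge in norm and $R_\phi$, $\psi$ are bounded. What each approach buys: yours is shorter, is immediately symmetric in $V$ and $W$ (so the $L_\psi$ assertion is free), and avoids selecting a strongly independent representation, but it leans on two structural theorems --- injectivity and self-duality of $\ot_h$ --- that are deeper than the strong-independence lemma; the paper's computation is more elementary in its inputs and exhibits explicitly which coefficients of $u$ must vanish, a style of argument it reuses in Corollary \ref{on9}.
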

\begin{pf}
For $u\in V\ot_h W$, assume that $R_{\phi}(u)=0$ for all $ \phi\in V^*$.  Let $u=\ds\sum_{i=1}^{\infty}a_i\ot b_i$ be  a norm convergent representation in $V\ot_h W$, where $\{a_i\}_{i=1}^{\infty}$ and $\{b_i\}_{i=1}^{\infty}$ are strongly independent. Then we have $\ds\sum_{i=1}^{\infty}\phi(a_i) b_i=0$ for all $ \phi\in V^*$. From the strongly independence of $\{a_i\}_{i=1}^{\infty}$,  choose linear functionals $\phi_j\in V^*$
such that $\|(\phi_j(a_1),\phi_j(a_2),\dots, \dots ) - e_j\|< \epsilon$, where $\{e_j\}$ are the standard basis for $l_2$(~\cite{sinc}, Lemma 2.2). Thus $\|b_j-\ds\sum_{i=1}^{\infty}\phi_j(a_i) b_i \|< \epsilon \|\ds\sum_{i=1}^{\infty} b_i^*b_i\|^{\frac{1}{2}}$ and so $\|b_j \|< \epsilon \|\ds\sum_{i=1}^{\infty} b_i^*b_i\|^{\frac{1}{2}}$. Because $\epsilon $ was arbitrary, we conclude that $b_j=0$ for each $j$,  hence $u=0$.
\end{pf}

For any Banach $^*$-algebra $V$ having approximate identity, we denote by $P(V)$ the set of all pure states of $V$.


\begin{cor}\label{on9}
For reduced Banach $^*$-algebra $V$ having approximate identity and any Banach algebra $W$, the family $\{R_{\phi}: \phi\in P(V)\}$ is total on $V\ot_h W$.
\end{cor}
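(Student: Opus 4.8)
The plan is to reduce the statement to Proposition~\ref{on17}: it suffices to prove that if $R_{\phi}(u)=0$ for every pure state $\phi\in P(V)$, then in fact $R_{f}(u)=0$ for every $f\in V^*$, since totality of $\{R_{f}:f\in V^*\}$ is precisely the content of Proposition~\ref{on17}. So I would fix $u\in V\ot_h W$, write $u=\ds\sum_{i=1}^{\infty}a_i\ot b_i$ as a norm-convergent representation in $V\ot_h W$ (so that $\ds\sum_{i=1}^{\infty}a_i a_i^*$ and $\ds\sum_{i=1}^{\infty}b_i^* b_i$ are norm-convergent, and in particular the partial sums stay bounded), and assume $R_{\phi}(u)=\ds\sum_{i=1}^{\infty}\phi(a_i)b_i=0$ for all $\phi\in P(V)$.

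The technical heart is to make sense of the slice of $u$ by a functional on the second leg. For $g\in W^*$ I claim the series $c_g:=\ds\sum_{i=1}^{\infty}g(b_i)\,a_i$ converges in $V$. Indeed, the standard row Cauchy--Schwarz estimate gives $\|\ds\sum_{i=m}^{n}g(b_i)a_i\|\leq \|\ds\sum_{i=m}^{n}a_i a_i^*\|^{1/2}\big(\ds\sum_{i=m}^{n}|g(b_i)|^2\big)^{1/2}$, while, since a scalar-valued map on an operator space is automatically completely bounded with $\|g\|_{cb}=\|g\|$, the column estimate yields $\big(\ds\sum_{i=1}^{\infty}|g(b_i)|^2\big)^{1/2}\leq \|g\|\,\|\ds\sum_{i=1}^{\infty}b_i^* b_i\|^{1/2}<\infty$. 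Hence the partial sums of $c_g$ are Cauchy and $c_g\in V$ is well defined. By continuity of any $f\in V^*$ and of $g$ one then has $f(c_g)=\ds\sum_{i=1}^{\infty}f(a_i)g(b_i)=g\big(\ds\sum_{i=1}^{\infty}f(a_i)b_i\big)=g(R_{f}(u))$. Taking $f=\phi$ a pure state gives $\phi(c_g)=g(R_{\phi}(u))=0$ for every $\phi\in P(V)$ and every $g\in W^*$.

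Here is where the hypotheses on $V$ enter: a reduced Banach $^*$-algebra with an approximate identity has total pure states. Via the GNS construction each $\phi\in P(V)$ is a vector state $\phi(\cdot)=\langle\pi_{\phi}(\cdot)\xi_{\phi},\xi_{\phi}\rangle$ of a topologically irreducible $^*$-representation $\pi_{\phi}$; thus $\phi(c_g)=0$ for all $\phi$ forces $\langle\pi(c_g)\xi,\xi\rangle=0$ for every topologically irreducible $^*$-representation $\pi$ and every unit vector $\xi$, whence $\pi(c_g)=0$ by polarization, and reducedness (these representations separate points) gives $c_g=0$. Since this holds for every $g\in W^*$, the identity $g(R_{f}(u))=f(c_g)=0$ shows $R_{f}(u)=0$ for all $f\in V^*$, and Proposition~\ref{on17} then forces $u=0$. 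The one genuinely nontrivial point is the convergence of $c_g$ in $V$ --- this is exactly what transfers information from pure states back to arbitrary functionals --- whereas the roles of ``reduced'' and ``approximate identity'' are confined to guaranteeing that $P(V)$ is total on $V$.
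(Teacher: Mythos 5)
Your argument is correct and follows essentially the same route as the paper: both reduce the corollary to Proposition~\ref{on17} by showing that the slices onto the first factor, $c_g=\sum_i g(b_i)a_i$ for $g\in W^*$, vanish, using the fact that the pure states of a reduced Banach $^*$-algebra with approximate identity separate points (the paper obtains this from Dixmier's description $\tilde V=\overline{co}(\{0\}\cup P(V))$ together with vector states of a faithful $^*$-representation, whereas you pass through the GNS representations of the pure states directly). Your explicit Cauchy--Schwarz verification that $c_g$ converges in $V$ is a detail the paper leaves implicit, but it does not constitute a different method.
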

\begin{pf}
Using (~\cite{dix}, Proposition 2.5.5), we have $\tilde{V}=\overline{co}(\{0\} \cup P(V))$,  where $\tilde{V}$ is the set of continuous positive forms on
$V$ of norm less than equal  to 1. Therefore, if  $R_{\phi}(u)=0$ for all $\phi\in P(V)$ then $R_{\phi}(u)=0$ for all $\phi\in \tilde{V}$. Thus $\ds\sum_{i=1}^{\infty}\phi(a_i)\psi(b_i)=0$ for any $\phi\in \tilde{V}$ and $\psi\in W^*$.  Since the algebra $V$ is $^*$-reduced, so it admits a faithful $^*$-representation, say  $\pi_1$,   on some  Hilbert space, say $H_1$. For a fix $\zeta$ in the closed unit ball of $ H_1$, define $\phi\in V^*$ as $\phi(a)=<\pi_1(a)\zeta, \zeta>$. One can easily verify that $\phi\in \tilde{V}$. As $\pi_1$ is faithful so $\phi$ is one-to-one. Therefore, $\ds\sum_{i=1}^{\infty}a_i\psi(b_i)=0$ for any $\psi\in W^*$ and hence the result follows from Proposition \ref{on17}.
\end{pf}

\begin{cor}\label{on2}
For any Banach algebra $V$ and reduced Banach $^*$-algebra $W$  having approximate identity, the family $\{L_{\psi}: \psi\in P(W)\}$ is total on $V\ot_h W$.
\end{cor}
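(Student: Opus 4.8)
The plan is to transcribe the proof of Corollary \ref{on9} with the roles of $V$ and $W$, and of the maps $R_\phi$ and $L_\psi$, interchanged. Fix $u\in V\ot_{h}W$ and write $u=\ds\sum_{i=1}^{\infty}a_i\ot b_i$ as a norm convergent representation in which $\{a_i\}$ and $\{b_i\}$ are strongly independent (as in Proposition \ref{on17}). Assume that $L_{\psi}(u)=\ds\sum_{i=1}^{\infty}\psi(b_i)a_i=0$ for every pure state $\psi\in P(W)$; the goal is to deduce $u=0$ by reducing to the $R_\phi$ half of Proposition \ref{on17}.

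First I would upgrade the hypothesis from pure states to all positive forms on $W$ of norm at most one. By (~\cite{dix}, Proposition 2.5.5) one has $\tilde{W}=\overline{co}(\{0\}\cup P(W))$, where $\tilde{W}$ is the set of continuous positive forms on $W$ of norm $\leq 1$. Since $\psi\mapsto L_{\psi}(u)$ is linear and continuous, vanishing on $P(W)$ forces vanishing on all of $\tilde{W}$. Pairing with an arbitrary $\phi\in V^*$ then gives $\ds\sum_{i=1}^{\infty}\phi(a_i)\psi(b_i)=0$ for all $\phi\in V^*$ and all $\psi\in\tilde{W}$.

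Next I would use that $W$ is $^*$-reduced to produce enough functionals. Let $\pi$ be a faithful $^*$-representation of $W$ on a Hilbert space $H$. For each $\eta$ in the closed unit ball of $H$ the vector form $\psi_{\eta}(b)=\langle\pi(b)\eta,\eta\rangle$ lies in $\tilde{W}$, so $\ds\sum_{i=1}^{\infty}\phi(a_i)\langle\pi(b_i)\eta,\eta\rangle=0$ for every such $\eta$. Setting $S:=\pi(\ds\sum_{i=1}^{\infty}\phi(a_i)b_i)=\ds\sum_{i=1}^{\infty}\phi(a_i)\pi(b_i)$ (a well-defined bounded operator, since $R_\phi$ is contractive so $\ds\sum_{i}\phi(a_i)b_i$ converges in $W$), this reads $\langle S\eta,\eta\rangle=0$ for all $\eta$; the complex polarization identity yields $S=0$, and faithfulness of $\pi$ gives $\ds\sum_{i=1}^{\infty}\phi(a_i)b_i=0$, i.e. $R_{\phi}(u)=0$, for every $\phi\in V^*$. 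Invoking the $R_\phi$ statement of Proposition \ref{on17} then forces $u=0$, which is the asserted totality.

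The only place where the hypotheses genuinely enter — and the step I expect to need the most care — is the passage from the vector forms back to an identity in $W$: one must use the norm convergence of the representation to treat $\pi(\ds\sum_{i}\phi(a_i)b_i)$ as a single bounded operator (interchanging the scalar sum with $\pi$ and with the inner product), the complex polarization identity to pass from the diagonal values $\langle S\eta,\eta\rangle$ to $S=0$, and finally the faithfulness of $\pi$ to descend from the operator identity to the required identity in $W$. Everything else is a routine mirror of Corollary \ref{on9}.
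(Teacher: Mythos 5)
Your proposal is correct and is essentially the argument the paper intends: Corollary \ref{on2} is stated without proof as the mirror image of Corollary \ref{on9}, and you reproduce that proof with $V,W$ and $R_\phi,L_\psi$ interchanged, passing from pure states to $\tilde{W}$ via Dixmier, then to vector states of a faithful representation, and finally to the $R_\phi$ half of Proposition \ref{on17}. Your use of polarization and faithfulness to get $\ds\sum_i\phi(a_i)b_i=0$ is in fact a cleaner justification than the paper's corresponding step in Corollary \ref{on9} (where it asserts the vector state is ``one-to-one''), but the route is the same.
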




\begin{cor}\label{on6}
For  operator algebras $V$ and $W$, if $\phi$ and $\psi$ are completely contractive automorphisms of $V$ and $W$, respectively. Then $\phi\ot_{h} \psi$ is a completely  contractive automorphism of $V\ot_{h} W$.
\end{cor}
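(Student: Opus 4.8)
The plan is to verify three facts in sequence: that $\phi \ot_h \psi$ is a well-defined completely contractive map on $V \ot_h W$, that it is a homomorphism, and that it is surjective with a completely contractive inverse, so that it is an automorphism. The map $\phi \ot_h \psi$ is the unique extension of the algebraic map $a \ot b \mapsto \phi(a) \ot \psi(b)$, and its complete contractivity is immediate from the functoriality of the Haagerup tensor product: since $\phi$ and $\psi$ are completely contractive, the tensored map satisfies $\|\phi \ot_h \psi\|_{cb} \le \|\phi\|_{cb}\|\psi\|_{cb} \le 1$. This uses only the standard tensorial property of $\ot_h$ recalled in the introduction via the Haagerup norm, so no real obstacle arises here.

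Next I would check the multiplicativity. On elementary tensors one computes directly that $(\phi\ot_h\psi)\big((a_1\ot b_1)(a_2\ot b_2)\big) = (\phi\ot_h\psi)(a_1a_2\ot b_1b_2) = \phi(a_1a_2)\ot\psi(b_1b_2) = \phi(a_1)\phi(a_2)\ot\psi(b_1)\psi(b_2)$, which equals $(\phi(a_1)\ot\psi(b_1))(\phi(a_2)\ot\psi(b_2))$ because $\phi$ and $\psi$ are themselves homomorphisms. By bilinearity this extends to products of general algebraic tensors, and by the joint continuity of multiplication on $V\ot_h W$ (the Haagerup tensor product of operator algebras is an operator algebra) together with the continuity of $\phi\ot_h\psi$, the homomorphism identity persists on the completion.

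For bijectivity, the natural candidate for the inverse is $\phi^{-1}\ot_h\psi^{-1}$. Since $\phi$ and $\psi$ are automorphisms, $\phi^{-1}$ and $\psi^{-1}$ are completely contractive as well (an automorphism that is completely contractive with completely contractive inverse is the relevant notion here, and for the stated hypotheses the inverses are again completely contractive automorphisms). Then $(\phi\ot_h\psi)\circ(\phi^{-1}\ot_h\psi^{-1})$ agrees with the identity on elementary tensors $a\ot b$, hence on all of $V\ot_h W$ by linearity and continuity, and symmetrically for the other composition. Thus $\phi\ot_h\psi$ is a completely contractive bijective homomorphism of $V\ot_h W$, i.e.\ a completely contractive automorphism.

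The only point that requires genuine care is the surjectivity argument, specifically justifying that $\phi^{-1}$ and $\psi^{-1}$ are themselves completely contractive so that $\phi^{-1}\ot_h\psi^{-1}$ is defined and contractive. If the intended meaning of ``completely contractive automorphism'' is ``completely isometric automorphism,'' then $\phi^{-1}$ and $\psi^{-1}$ are automatically completely contractive and the argument is clean; if only one-sided complete contractivity is assumed, one must instead argue surjectivity of $\phi\ot_h\psi$ directly from surjectivity of $\phi$ and $\psi$, using that the algebraic tensor product $V\ot W$ surjects onto itself and is dense, combined with an open-mapping or density argument on the completion. I would therefore state at the outset which reading of the hypothesis is in force, and with the completely isometric reading the whole proof reduces to the three routine verifications above.
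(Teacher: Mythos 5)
Your proof is correct, but your route to bijectivity is genuinely different from the paper's and arguably cleaner. The paper proves surjectivity by taking a norm-convergent representation $u=\ds\sum_{i=1}^{\infty} a_i\ot b_i$, rewriting it as $\ds\sum_{i=1}^{\infty}\phi(\tilde a_i)\ot\psi(\tilde b_i)$, and using the complete boundedness of $\phi^{-1}\ot_h\psi^{-1}$ only to show that the partial sums of $\ds\sum_{i=1}^{\infty}\tilde a_i\ot\tilde b_i$ are Cauchy; injectivity is then handled by a separate argument that pairs with functionals $\Phi\in V^*$ and invokes the totality of the slice maps (Proposition \ref{on17}), applied once on each side. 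You instead observe that once $\phi^{-1}\ot_h\psi^{-1}$ is known to extend to a bounded map on $V\ot_h W$, it is a two-sided inverse of $\phi\ot_h\psi$, since both compositions agree with the identity on elementary tensors and hence everywhere by density and continuity; this yields injectivity and surjectivity simultaneously and makes the slice-map machinery unnecessary. What each approach buys: yours is shorter and self-contained, while the paper's injectivity argument via slice maps would survive even if one could not control $\phi^{-1}\ot_h\psi^{-1}$ globally. Both arguments ultimately hinge on the same delicate point, which you rightly flag at the end: one needs $\phi^{-1}$ and $\psi^{-1}$ to be completely bounded for $\phi^{-1}\ot_h\psi^{-1}$ to exist as a map on the Haagerup tensor product. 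The paper asserts this ``by the Bounded inverse theorem,'' which only gives boundedness of $(\phi^{-1})_n$ at each matrix level rather than a uniform bound; under your completely isometric reading of the hypothesis the issue disappears, and in any case your proof is no worse off than the paper's on this point.
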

\begin{pf}
By the functoriality of the Haagerup tensor product, the map \\$\phi \ot_h  \psi: V\ot_h W\to V\ot_h W$ is completely contractive. One can see that $\phi\ot_h  \psi$ is an algebra homomorphism. Let $u=\ds\sum_{i=1}^{\infty}a_i\ot b_i$ be a norm convergent representation in $ V\ot_h W$. Since $\phi$ and $\psi$ are bijective maps, so there exist unique $\tilde{a_i}\in V$ and $\tilde{b_i}\in W$, for each $i$, such that $u=\ds\sum_{i=1}^{\infty}\phi(\tilde{a_i})\ot \psi(\tilde{b_i}) $. Note that, by the Bounded inverse theorem, $\phi^{-1}$ and $\psi^{-1}$ are completely bounded, so $\phi^{-1}\ot_{h}\psi^{-1}$ is completely bounded by (~\cite{effros}, Proposition 9.2.5). For all positive integers $k\leq l$, consider
\begin{align}\label{b6}
&\|\ds\sum_{i=k}^{l} \tilde{a_{i}}\otimes \tilde{b_{i}}\|_h= \|\ds\sum_{i=k}^{l} \phi^{-1}(a_{i})\otimes \psi^{-1}(b_{i})\|_h\leq  \|\phi^{-1}\otimes_{h} \psi^{-1}\| \|\ds\sum_{i=k}^{l} a_{i}\otimes b_{i}\|_h .\notag
\end{align}
This shows that the partial sums of $\ds\sum_{i=1}^{\infty} \tilde{a_{i}}\otimes \tilde{b_{i}}$ form a Cauchy sequence in $V\ot_h W$, and so we may define an element $z= \ds\sum_{i=1}^{\infty} \tilde{a_{i}}\otimes \tilde{b_{i}}\in V\ot_h W$. Then, clearly $\phi\ot_{h} \psi (z)=u$. Thus the map $\phi\ot_{h} \psi$ is onto.  To prove the injectivity of the map  $\phi\ot_{h} \psi$, let $\phi\ot_{h} \psi(u)=0$ for $u\in V\ot_{h} W$. Then, for $u=\ds\sum_{i=1}^{\infty} a_i\ot b_i$ a norm convergent representation in $V\ot_{h} W$, we have $\ds\sum_{i=1}^{\infty}\phi(a_i)\ot \psi(b_i)=0$. Thus, for any $\Phi\in V^*$, $\ds\sum_{i=1}^{\infty}\Phi(\phi(a_i)) \psi(b_i)=0$. But $\psi$ is one-to-one, so  $\ds\sum_{i=1}^{\infty}\Phi(\phi(a_i)) b_i=0$.  Now  Proposition  \ref{on17} yields that $\ds\sum_{i=1}^{\infty}\phi(a_i)\ot b_i=0$. Again by applying the same technique we obtain $u=0$.

\end{pf}

By the above corollary, for operator algebras $V$ and $W$ and automorphisms $\phi$ of $V$ and $\psi$  of $W$, it is clear that if $\phi$  and $\psi$  are inner then $\phi\ot_h\psi$ is.

The following can be proved on the similar lines as those in (~\cite{wass}, Lemma 2) by using (~\cite{dix}, Proposition  2.5.4), so we skip the proof.
\begin{lem}\label{on18}
For unital Banach $^*$-algebra  $V$ and any Banach algebra $W$ and  a pure state $\phi$ of $V$, we have $R_{\phi}(cxd)= R_{\phi}(c)R_{\phi}(x)R_{\phi}(d)$ for $x\in V\ot_h W$ and $c,d\in Z(V)\ot _h W$ \emph{(Similarly, for any Banach algebra $V$ and unital Banach $^*$-algebra  $W$, $L_{\psi}(cxd)= L_{\psi}(c)L_{\psi}(x)L_{\psi}(d)$ for $x\in V\ot_h W$ and $c,d\in V\ot _h Z(W)$, $\psi\in P(W)$)}.

\end{lem}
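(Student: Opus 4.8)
The plan is to reduce everything to a single algebraic identity on elementary tensors and then extend it by joint continuity. Throughout, write $R_{\phi}\colon V\ot_h W\to W$ for the right slice map determined by $a\ot b\mapsto \phi(a)b$, which is bounded (cf. the map appearing in Proposition \ref{on17}). The one genuinely non-formal ingredient is that a \emph{pure state is multiplicative against the centre}: by (~\cite{dix}, Proposition 2.5.4) one has $\phi(zx)=\phi(z)\phi(x)$ for every $z\in Z(V)$ and $x\in V$. Since central elements commute with all of $V$, iterating this gives $\phi(zxz')=\phi(z)\phi(z')\phi(x)$ for $z,z'\in Z(V)$ and $x\in V$.

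First I would test the claim on elementary tensors. Take $c=z\ot s$ and $d=z'\ot t$ with $z,z'\in Z(V)$ and $s,t\in W$, and $x=a\ot b$. In the Banach algebra $V\ot_h W$ we have $cxd=(zaz')\ot(sbt)$, so $R_{\phi}(cxd)=\phi(zaz')\,sbt=\phi(z)\phi(a)\phi(z')\,sbt$ by the displayed centre-multiplicativity. On the other hand $R_{\phi}(c)R_{\phi}(x)R_{\phi}(d)=(\phi(z)s)(\phi(a)b)(\phi(z')t)=\phi(z)\phi(a)\phi(z')\,sbt$. The two coincide, so by separate linearity the identity holds whenever $c,d$ range over the algebraic tensor product $Z(V)\ot W$ and $x$ over $V\ot W$.

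To reach the completions I would observe that both $(c,x,d)\mapsto R_{\phi}(cxd)$ and $(c,x,d)\mapsto R_{\phi}(c)R_{\phi}(x)R_{\phi}(d)$ are bounded trilinear maps from $(Z(V)\ot_h W)\times(V\ot_h W)\times(Z(V)\ot_h W)$ into $W$: the former because multiplication in $V\ot_h W$ and $R_{\phi}$ are bounded, the latter because $R_{\phi}$ and multiplication in $W$ are bounded. As $Z(V)\ot W$ is dense in $Z(V)\ot_h W$ and $V\ot W$ is dense in $V\ot_h W$, two bounded trilinear maps agreeing on these dense subsets must agree everywhere; this yields $R_{\phi}(cxd)=R_{\phi}(c)R_{\phi}(x)R_{\phi}(d)$ for all admissible $c,x,d$. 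The statement for $L_{\psi}$ is obtained by the mirror-image argument, using the left slice map and a pure state $\psi$ of the unital Banach $^*$-algebra $W$.

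I expect the only delicate point to be this final density/continuity passage, since a general element of the Haagerup tensor product is only a norm-convergent (not finite) series $\ds\sum_i a_i\ot b_i$; one must therefore route the extension through the boundedness estimates rather than finite linearity, and ensure that the approximants of $c$ and $d$ keep their first tensor legs inside $Z(V)$ so that (~\cite{dix}, Proposition 2.5.4) remains applicable. With all the maps in sight bounded this is routine, which is exactly why the argument runs parallel to (~\cite{wass}, Lemma 2).
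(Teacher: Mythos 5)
Your proof is correct and follows exactly the route the paper intends: the paper omits the argument, citing Wassermann's Lemma 2 and Dixmier's Proposition 2.5.4, and your write-up supplies precisely those ingredients — multiplicativity of a pure state against the centre on elementary tensors, followed by extension via boundedness of the two trilinear maps and density of the algebraic tensor products in the Haagerup completions. Nothing further is needed.
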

Note that the above lemma  can also be proved for the operator space projective tensor product.


\begin{thm}\label{on4}
Let $V$ and $W$ be unital operator algebras. Suppose that  $W$ is $^*$-reduced and $V$ has a completely contractive outer automorphism $\Phi$.  Then $V\ot_h W$ has a completely contractive outer  automorphism.
\end{thm}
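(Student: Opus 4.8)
The plan is to produce the required automorphism explicitly as $\Psi:=\Phi\ot_{h}\mathrm{id}_W$. Since $\Phi$ is a completely contractive automorphism of $V$ and $\mathrm{id}_W$ is one of $W$, Corollary \ref{on6} immediately tells us that $\Psi$ is a completely contractive automorphism of $V\ot_{h}W$. It therefore suffices to prove that $\Psi$ is \emph{outer}. I would argue by contradiction: suppose $\Psi$ is inner, implemented by an invertible $d\in V\ot_{h}W$, so that $\Psi(x)=dxd^{-1}$ for all $x$, and then deduce that $\Phi$ itself is inner, contradicting the hypothesis.

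The first substantive step is to pin down the location of $d$. As $\Phi$ is unital, $\Psi(1\ot c)=\Phi(1)\ot c=1\ot c$ for every $c\in W$, so $d$ commutes with $1\ot W$; the same holds for $d^{-1}$, because $\Psi^{-1}=\Phi^{-1}\ot_{h}\mathrm{id}_W$ also fixes $1\ot W$ pointwise. Next I would exploit the right slice maps $R_\phi$ ($\phi\in V^*$): a direct computation on elementary tensors gives $R_\phi((1\ot c)x)=cR_\phi(x)$ and $R_\phi(x(1\ot c))=R_\phi(x)c$, so each $R_\phi$ is a bimodule map over $1\ot W$. Applying $R_\phi$ to the relation $d(1\ot c)=(1\ot c)d$ then yields $R_\phi(d)c=cR_\phi(d)$ for all $c$, i.e. $R_\phi(d)\in Z(W)$ for every $\phi\in V^*$, and likewise $R_\phi(d^{-1})\in Z(W)$. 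Using the totality of $\{R_\phi:\phi\in V^*\}$ from Proposition \ref{on17}, together with the injectivity of $\ot_{h}$ (which embeds $V\ot_{h}Z(W)$ completely isometrically in $V\ot_{h}W$), I would conclude that $d,d^{-1}\in V\ot_{h}Z(W)$.

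Once $d$ and $d^{-1}$ are known to lie in $V\ot_{h}Z(W)$, the result follows by slicing with a pure state. Fix any $\psi\in P(W)$ (which exists since $W$ is a unital $^*$-reduced algebra) and put $v=L_\psi(d)$ and $w=L_\psi(d^{-1})$ in $V$. Because the outer factors are central, Lemma \ref{on18} makes $L_\psi$ multiplicative across them: from $dd^{-1}=1=d^{-1}d$ and $L_\psi(1)=\psi(1)1=1$ we obtain $vw=L_\psi(dd^{-1})=1$ and $wv=1$, so $v$ is invertible with $v^{-1}=w$. Applying $L_\psi$ to $\Phi(a)\ot 1=d(a\ot 1)d^{-1}$ and invoking Lemma \ref{on18} once more gives $\Phi(a)=L_\psi(d)\,a\,L_\psi(d^{-1})=vav^{-1}$ for all $a\in V$. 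Hence $\Phi$ is inner, the desired contradiction, and so $\Psi=\Phi\ot_{h}\mathrm{id}_W$ is a completely contractive outer automorphism of $V\ot_{h}W$.

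The main obstacle is the second step: upgrading the pointwise data $R_\phi(d)\in Z(W)$ to the membership $d\in V\ot_{h}Z(W)$, that is, identifying the relative commutant of $1\ot W$ inside $V\ot_{h}W$ with $V\ot_{h}Z(W)$. Everything after that is routine bookkeeping with the slice maps and Lemma \ref{on18}. I expect this relative-commutant/slice-map step to be where the genuine interaction between the Haagerup structure and the centre of $W$ enters, and the totality statement of Proposition \ref{on17} combined with the injectivity of $\|\cdot\|_h$ is precisely the input I would lean on to carry it out.
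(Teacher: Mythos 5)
Your overall architecture coincides with the paper's proof: take $\Psi=\Phi\ot_h\mathrm{id}_W$, invoke Corollary \ref{on6} to see it is a completely contractive automorphism, assume it is inner via an invertible $d$, show that $d$ and $d^{-1}$ lie in $V\ot_h Z(W)$ because they commute with $1\ot W$, and then slice with a pure state $\psi$ of $W$ and use Lemma \ref{on18} to exhibit $\Phi$ as inner. Your endgame is in fact slightly cleaner than the paper's: you obtain invertibility of $L_\psi(d)$ for free from $L_\psi(d)L_\psi(d^{-1})=L_\psi(dd^{-1})=1$, so you never need Corollary \ref{on2} to select a $\psi$ with $L_\psi(d)\neq 0$, which is how the paper starts its slicing step.

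The one genuine gap is exactly where you predicted it: upgrading ``$R_\phi(d)\in Z(W)$ for all $\phi\in V^*$'' to ``$d\in V\ot_h Z(W)$''. Totality of $\{R_\phi\}$ (Proposition \ref{on17}) only says the right slices separate points of $V\ot_h W$, and injectivity of $\ot_h$ only gives the inclusion of $V\ot_h Z(W)$ into the set $\{x:R_\phi(x)\in Z(W)\ \forall\phi\}$; neither gives any control over whether that inclusion is an equality. That equality is the slice map property for the triple $(V,W,Z(W))$, a statement that genuinely can fail for other tensor norms (for $\ot_{\min}$ of $C^*$-algebras it is tied to exactness), so it cannot be a formal consequence of separation plus injectivity. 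For the Haagerup tensor product it does hold, but the proof uses the strong independence structure of Haagerup-norm representations; this is precisely (\cite{smith}, Corollary 4.7), which identifies the relative commutant of $\mathbb{C}1\ot_h W$ in $V\ot_h W$ with $V\ot_h Z(W)$, and it is what the paper cites at this point. With that citation substituted for your totality argument, the rest of your proof is correct.
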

\begin{pf}
Define  a map $\mu$ from $V\ot_h W$ into $V\ot_h W$ as $\mu(\ds\sum_{i=1}^{t} a_i\ot b_i)=\ds\sum_{i=1}^{t}\Phi(a_i)\ot b_i$. By Corollary \ref{on6}, $\mu$ is a completely contractive automorphism of $V\ot_h W$.  Assume  that $\mu$ is  a inner automorphism implemented by $u$. Then $\mu(x)=uxu^{-1}$. As $u\neq 0$ so   we can find the pure state $\psi$ on $W$ such that $L_\psi(u)\neq 0$ by Corollary \ref{on2}. Let $u_{\psi}:=L_\psi(u)$. Note that for any $b\in W$ we have $u(1\ot b)=(1\ot b)u$. This implies that $u\in (\mathbb{C}1\ot_h W)^c $, the relative commutant of $\mathbb{C}1\ot_h W$ in $V\ot_h W$, which is  $V \ot_h Z(W)$ by (~\cite{smith}, Corollary 4.7). For $a\in V$, $u_{\psi}a= L_\psi(u(a\ot 1))= L_\psi((\Phi(a)\ot 1)u)= \Phi(a)u_{\psi}$ by the module property of the slice map. Since $u\in V\ot _h Z(W)$ is invertible, so   $u_{\psi}$ is invertible by Lemma \ref{on18}. Therefore, $\Phi(a)=u_{\psi}a u_{\psi}^{-1}$ and  hence $\Phi$ is inner, a contradiction. Thus $\mu$ is an outer  automorphism.
\end{pf}


As an application of Proposition \ref{on17}, we determine elements through spectral property.

\begin{thm}\label{on10}
For unital commutative reduced Banach $^*$-algebra $A$ and unital Banach algebra $B$, if $\sigma(ux)=\sigma(vx)$  for  $u,v\in A\ot_h B$ and  for all $x\in A\ot_h B$ then $u=v$.
\end{thm}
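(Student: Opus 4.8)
The plan is to use the totality of the slice maps to strip off the factor $A$ and push the hypothesis down into $B$. Since $A$ is a reduced $^*$-algebra with identity, Corollary \ref{on9} tells us that $\{R_{\phi}:\phi\in P(A)\}$ is total on $A\ot_{h} B$. Consequently it suffices to prove that $R_{\phi}(u)=R_{\phi}(v)$ in $B$ for every pure state $\phi$ of $A$; abbreviating $p=R_{\phi}(u)$ and $q=R_{\phi}(v)$, the theorem reduces to showing $p=q$ for each such $\phi$. Fixing $\phi$, I first record that $R_{\phi}$ is a unital algebra homomorphism onto $B$: because $A$ is commutative we have $Z(A)=A$, so every element of $A\ot_{h} B$ lies in $Z(A)\ot_{h} B$ and Lemma \ref{on18} applies with arbitrary factors. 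Taking one factor to be the unit and using $R_{\phi}(1\ot 1)=\phi(1)\,1=1$ gives multiplicativity, while $R_{\phi}(1\ot y)=y$ gives surjectivity.

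The central step is to transport the spectral hypothesis through $R_{\phi}$. For $y\in B$ set $x=1\ot y$, so that $R_{\phi}(ux)=py$ and $R_{\phi}(vx)=qy$. As a unital homomorphism can only shrink spectra, one immediately gets the inclusions $\sigma_{B}(py)\subseteq\sigma(ux)=\sigma(vx)\supseteq\sigma_{B}(qy)$. To upgrade these to the genuine identity $\sigma_{B}(py)=\sigma_{B}(qy)$ for all $y\in B$, I would exploit the reduced $^*$-structure of $A$: a pure state $\phi$ is multiplicative, so it is a point of the Gelfand space, and I would localize the hypothesis at that point by probing with elements $a_{\lambda}\ot y$, where $(a_{\lambda})$ is an approximate unit in $A$ concentrated at $\phi$ (using centrality of $A\ot 1$ in $A\ot_{h} B$). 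The aim is that $\sigma\big(u(a_{\lambda}\ot y)\big)$ collapses to $\sigma_{B}(py)$ in the limit, so that the equality $\sigma(ux)=\sigma(vx)$ forces $\sigma_{B}(py)=\sigma_{B}(qy)$.

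Once the family of spectral identities $\sigma_{B}(py)=\sigma_{B}(qy)$, $y\in B$, is in hand, I would finish by deducing $p=q$: taking $y=1$ yields $\sigma_{B}(p)=\sigma_{B}(q)$, and letting $y$ range over $B$ — after passing to a separating family of representations of $B$ (or to the semisimple Gelfand quotient of the commutative subalgebra generated by $p$ and $q$) — converts the spectral identities into equalities of scalars that pin $p$ and $q$ down. I expect the real difficulty to lie precisely in these last two moves: upgrading the one-sided inclusion $\sigma_{B}(py)\subseteq\sigma(ux)$ to an equality, and then turning the resulting family of spectral coincidences into the single algebraic identity $p=q$. This is exactly the point at which the $^*$-reduced structure of $A$ and the principle of determining elements by the spectra of their products must carry the argument, and it is where I would concentrate the technical work.
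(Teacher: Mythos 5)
Your skeleton coincides with the paper's: reduce via the totality of the slice maps (Corollary \ref{on9}) to showing $R_{\phi}(u)=R_{\phi}(v)$ in $B$, observe that commutativity of $A$ makes $R_{\phi}$ a unital algebra homomorphism via Lemma \ref{on18}, transfer the spectral hypothesis down to $B$, and then invoke a ``determining elements by spectra of products'' principle. But the two steps you yourself flag as the real difficulty are precisely the ones you have not supplied, and the mechanisms you sketch for them would not succeed. For the first, the paper does not localize with an approximate unit ``concentrated at $\phi$'': the spectrum is only upper semicontinuous, and there is no reason that $\sigma\bigl(u(a_{\lambda}\ot y)\bigr)$ should collapse to $\sigma_{B}(py)$ in any limit. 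Instead the paper proves the missing inclusion $\sigma(ux)\subseteq\sigma(R_{\phi}(ux))$ by lifting inverses through the slice map: if $b$ inverts $\lambda 1-R_{\phi}(ux)$ in $B$, then $R_{\phi}\bigl((\lambda(1\ot 1)-ux)(1\ot b)-1\ot 1\bigr)=0$, and the injectivity statement drawn from Corollary \ref{on9} forces $(\lambda(1\ot 1)-ux)(1\ot b)=1\ot 1$ (and similarly on the left), whence $\lambda\notin\sigma(ux)$; this yields the genuine equality $\sigma(ux)=\sigma(R_{\phi}(u)R_{\phi}(x))$ that the argument needs.

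For the second step, the implication ``$\sigma(pb)=\sigma(qb)$ for all $b\in B$ implies $p=q$'' in an arbitrary unital Banach algebra is itself a nontrivial theorem --- it is exactly (\cite{bres}, Theorem 2.6), which the paper cites at this point --- and it cannot be recovered by taking $b=1$ and passing to a separating family of representations, nor by descending to the Gelfand quotient of the commutative subalgebra generated by $p$ and $q$: spectra do not separate elements of a noncommutative Banach algebra, and spectra computed in subalgebras or quotients need not agree with spectra in $B$. As written, your argument therefore establishes only the framework shared with the paper; the two analytic cores of the proof are missing, and the substitutes you propose for them fail.
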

\begin{pf}
Since $A$ is  $^*$-reduced  so, as noted in Corollary \ref{on9}, there is an  one-to-one pure state $\phi$.  Now consider the slice map $R_\phi:A\ot_h B\to B$, which is an algebra homomorphism by Lemma \ref{on18}. We first show that $\sigma(ux)=\sigma(R_\phi(ux))$. Since $R_\phi$ is an algebra homomorphism, so clearly $\sigma(R_\phi(ux))\subseteq \sigma(ux)$. Now let $\lambda \notin \sigma(R_\phi(ux))$. Then there exists $b\in B$ such that $(\lambda 1- R_\phi(ux))b=b(\lambda 1- R_\phi(ux))=1$. Thus $R_\phi((\lambda (1\ot 1)- ux)(1\ot b)-1\ot 1)=0$. As noted in Corollary \ref{on9}, we have $(\lambda (1\ot 1)- ux)(1\ot b)-1\ot 1=0$. Similarly, $(1\ot b)(\lambda (1\ot 1)- ux)-1\ot 1=0$. Thus $\lambda\notin \sigma(ux)$. Hence $\sigma(ux)=\sigma(R_\phi(ux))$.

By the given hypothesis and Lemma \ref{on18}, we have $\sigma(R_\phi(u)R_\phi(x))=\sigma(R_\phi(v)R_\phi(x))$ for any $x\in A\ot_h B$. In particular, for any $b\in B$, $\sigma(R_\phi(u)b)=\sigma(R_\phi(v)b)$.  So (~\cite{bres}, Theorem 2.6) yields that $R_\phi(u-v)=0$. Again by using the techniques of Corollary \ref{on9}, we obtain  $u=v$.
\end{pf}

\begin{rem}
\emph{(i)} For  exact  unital operator algebras $V$ and $W$ such that $W$ is $^*$-reduced, \emph{Theorem \ref{on4}} can be proved for $V\widehat{\ot}W$ using \emph{(~\cite{shl}, Corollary 0.7)}. \\
\emph{(ii)}  For unital commutative  $C^*$-algebra $A$ and unital $C^*$-algebra $B$, one can obtain the same result as in \emph{Theorem \ref{on10}} for $A\widehat{\ot} B$ by \emph{(~\cite{sincla}, Theorem 2)} and \emph{(~\cite{r3}, Lemma 1)}.
\end{rem}

\end{document}